\newtheorem{theorem}{Theorem}
\newtheorem{lemma}{Lemma}
\newtheorem{proposition}{Proposition}
\theoremstyle{definition}
\newtheorem{mydef}{Definition}[section]
\theoremstyle{remark}
\title{A Logic of Stability: Formalizing Similarity in Counterfactual Reasoning}
\author{Marta Esteves}
\begin{document}

\maketitle
\begin{abstract}
   Counterfactual reasoning is a foundational topic in both philosophical and logical studies \cite{Stalnaker1968-STAATO-5, Lewis1973-LEWC-2}. A pivotal component of counterfactual analysis is the concept of similarity between possible worlds \cite{CORR_A_2022, ESTEVA1997235, Lewis1979-LEWCDA, Makinson94, Pollock1976-POLTPW}. In this paper, we propose the introdutcion of a metric to quantify the degree of similarity between possible worlds, where two worlds are the more similar the longer they share a common history, drawing on a similarity framework influenced by \cite{Lewis1979-LEWCDA}. We prove that this metric satisfies the properties of an ultra-metric, offering a mathematically robust foundation for a corresponding graded notion of hierarchical similarity. We develop and axiomatize a multi-modal logic of similarity, \( L_{\square_\varepsilon} \), and demonstrate its soundness and completeness with respect to the class of ultra-metric spaces. Finally, we explore modal definability, establishing connections between ultra-metric semantics and the broader theory of counterfactuals.
\end{abstract}

\section{Introduction}

This paper introduces a formal account of similarity between possible worlds and the corresponding modal system $L_{\square_\varepsilon}$ (the \emph{logic of stability}). Counterfactuals—sentences of the form ``If $A$ were the case, then $B$ would be the case''—are commonly evaluated by comparing alternative worlds for their similarity to the actual world. Following Lewis, a counterfactual $\phi\boxright\psi$ is true iff all the most similar $\phi$-worlds are $\psi$-worlds. In our approach similarity is measured quantitatively: worlds are interpreted as event-histories and an ultra\-metric on these histories yields a graded, hierarchical notion of nearness. The operators $\square_\varepsilon$ and $\Diamond_\varepsilon$ then express, respectively, that a formula is stable across all worlds within radius $\varepsilon$ and that it is witnessed by some world within radius $\varepsilon$. This makes the link to counterfactuals explicit: stability and plausibility track how truths persist under small, graded perturbations of circumstances, and the ultra\-metric structure implements the ``most similar worlds / closest alternatives'' intuition in a precise, mathematically tractable way.

Because the motivation for axiomatisation of the logic of similarity is tightly linked to the central role this notion plays in counterfactual semantics, we will start by briefly introducing this topic. Counterfactual take the general form, \emph{“If A were the case, then B would be the case”}. As an example, consider the statement: \emph{“If kangaroos didn’t have tails, they would topple over,”}. Counterfactuals play a critical role in philosophical and logical analyses of causation (\cite{Lewis1973-LEWCausation}, decision-making (\cite{Joyce1999-JOYTFO-4}), and scientific explanation (\cite{Woodward2003-WOOMTH}). Foundational contributions by Stalnaker (\cite{Stalnaker1968-STAATO-5}) and Lewis (\cite{Lewis1973-LEWC-2}) have shaped much of the contemporary discussion.

Lewis’ seminal possible-world semantics grounds the truth of a counterfactual in a notion of similarity. Specifically, a counterfactual of the form 
\[
\phi \boxright \psi
\]
is true if, in all the most similar possible worlds where \(\phi\) holds, \(\psi\) also holds. This interpretation hinges on an implicit notion of similarity, often described intuitively as a measure of “closeness” between possible worlds. For example, the counterfactual above can equivalently be read as, \emph{“In all the most similar worlds where kangaroos don’t have tails, they topple over”} in terms of Lewis's possible world semantics.

Lewis illustrates the intuitive notion of similarity through the concept of \emph{similarity spheres}. For a given world \(x\), the spheres represent nested sets of worlds, with smaller spheres containing worlds more similar to \(x\). Consider the diagram below:

\begin{figure}[ht]
    \centering
    \begin{tikzpicture}
        \draw[thick] (0,0) circle (1cm);   
        \draw[thick] (0,0) circle (2cm);   

        \filldraw (0,0) circle (1.5pt);
        \node at (0.2,0.2) {\(x\)};

        \node at (0.7,0.5) {\(x_1\)};

        \node at (1.5,1) {\(x_2\)};
    \end{tikzpicture}
    \caption{Similarity spheres}
    \label{fig:placeholder}
\end{figure}
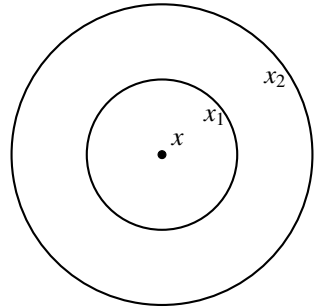

Here, \(x_1\) is closer to \(x\) than \(x_2\), as it lies within a smaller similarity sphere. Our main contribution in this paper is that of building on Lewis's notion of similarity as illustrated by similarity spheres. The novel approach we suggest relies on quantifying similarity by a degree - which ,as we we will prove, behaves as an ultra-metric distance. 

\begin{mydef}
    An ultra-metric on a set \(M\) is a real valued function $d: M \times M \rightarrow \mathbb{R}$, such that, for all $x, y, z \in M$:
\end{mydef}
\begin{itemize}
    \item[1.] $d(x, x) \geq 0$;
    \item[2.] $d(x, y)= d(y, x)$; 
    \item[3.] $d(x, x)=0$; 
    \item[4.] If $d(x, y)=0$ then $y=x$; 
    \item[5.] $d(x, y) \leq max \{d(x, z), d(y, z)\}$ \footnote{Note that an ultra-metric then is a metric that has the triangle inequality in point 5., instead of the usual triangle inequality $d(x, y) \leq d(x, z) + d(y, z)$} 
\end{itemize}
This type of metric provides a structured framework for analyzing the graded similarity between possible worlds. It is important to note, however, that the choice of an ultra-metric is not meant to universally prescribe the notion of similarity between possible worlds. Rather, its relevance arises from the specific interpretation of the structure of the model employed in this paper. The central idea of this model is to interpret possible worlds as binary sequences $x$ of events - where a $0$ at place $x_i$ means that event $E_i$ didn't happen and a $1$ that it did. The structure thus imposed on the set of worlds is hierarchical - as determined by temporal divergence in event histories. The latter then determines the similarity between worlds: more distant worlds are less similar worlds in that their histories diverged earlier in time. We will show later that an adequate model for this is the Cantor space with the ultra-metric distance between sequences.

As a preliminary illustration of how such a metric could be associated to the similarity between possible worlds, consider the following reformulation of the similarity spheres diagram:

  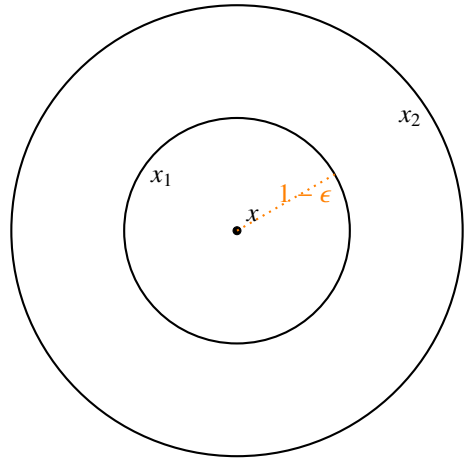
\begin{figure}[ht]
    \centering
    \begin{tikzpicture}
        \draw[thick] (0,0) circle (1.5cm);   
        \draw[thick] (0,0) circle (3cm);     

        \filldraw (0,0) circle (1.5pt);
        \node at (0.2,0.2) {\(x\)};

        \node at (-1,0.7) {\(x_1\)};

        \node at (2.3,1.5) {\(x_2\)};

        \draw[dotted, thick, orange] (0,0) -- (1.3,0.75); 
        \node[orange] at (0.9,0.5) {\(1 - \epsilon\)};
    \end{tikzpicture}
    \caption{Similarity spheres with radius}
    \label{fig:similarity-spheres}
\end{figure}

Where we now say that \(x_1\) is similar to a degree of at most $\epsilon$ to \(x\) because \(x_1\) lies inside a sphere of radius $1- \epsilon$ around \(x\). 

The axiomatization of the logic $L_\epsilon$ relies on this graded notion of similarity and the derived notions of stability and its dual notion of plausibility. This is why the two following notions are foundational to our semantics: 
\begin{mydef}
Inspired by Lange's (\cite{Lange1999-LANLCS}) notion of ``range of invariance" we introduce the following definitions:
\begin{itemize}
    \item[1] \textbf{Range of (the truth of) $\phi$ with respect to a world $w$}: The set $\{v|\exists \epsilon \in [0,1] ( d(w, v)\leq \epsilon) \text{ and } \phi \text{ is true in } v\}$;
    \item[2] \textbf{Degree of stability of $\phi$ in a world $w$} We say that $\phi$ is stable to a degree of $\epsilon$ in a world $w$ iff for all $v$ such that $d(w, v) \leq \epsilon$ then $\phi$ is true at $v$. 
    \item[3] \textbf{Degree of plausibility of $\phi$ in a world $w$} We say that $\phi $ is plausible to a degree of (up to) $1-\epsilon$ in a world $w$ iff there exists $v$ such that $d(w, v) \leq \epsilon$ and $\phi$ is true at $v$.\footnote{Equivalently: if the truth of $\neg \phi$ is not stable in $w$ to a degree of up to $\epsilon$.}
\end{itemize}
\end{mydef}
The intuition behind this definition is that the truth of a sentence $\phi$ is more stable at a world $w$ the more the ``circunstances" - relative to current circumstance - can change while $\phi$ is still true. 

As an example consider the sentences: $p$ =``I have more than 0 euros" and $q=$ `I have more than 10 euros". Then $p$ is more stable than $q$ because $p$ takes bigger changes in someone's current financial circumstances to become false than $q$. In terms of the event history interpretation of possible worlds one says that the truth of a sentence $\phi$ is more stable relative to a given world $w$ the earlier the worlds where $\phi$ is true diverged from $w$. 

Dually, one considers (the truth of) a sentence to be the more plausible the less the current ``circumstances" would have to change for it to be true. And accordingly one interprets $\phi$ being the more plausible in a world $w$ the closer the worlds are where $\phi$ is true.  
For example the sentence ``I went to at least one King Gizzard and the Lizard Wizard concert" is always more plausible than the sentence ``I went to at least two King Gizzard and the Lizard Wizard concerts".

This multimodal logic includes infinitely many quantifiers \(\square_\epsilon\) and their duals \(\Diamond_\epsilon\), with \(\epsilon\) ranging over a countable subset of the unit interval \([0,1]\). The truth of conditions of  \(\square_\epsilon \phi\) and \(\Diamond_\epsilon \phi\) are - according to our semantics - the following: 
\begin{itemize}
    \item $\square_\epsilon \phi$ is true at a world $w$ iff the truth $\phi$ is stable up to a degree of $\epsilon$ (i.e, $\epsilon$-stable) at $w$; 
    \item $\Diamond_\epsilon \phi$ is true at a world $w$ iff $\phi$ is plausible (i.e, plausibly true) up to a degree of $1-\epsilon$ (i.e, $1-\epsilon$-plausible) at $w$; 
    
\end{itemize}
We will see how the axioms of $L_\epsilon$ align with the main intuitions about the stability and plausibility interpretation of the quantifiers $\square_\epsilon$ and $\Diamond_\epsilon$. Furthermore, we will also see that the axioms of $L_\epsilon$ reflect the symmetry, transitivity and reflexivity of the similarity relation, while also guaranteeing that it obeys the ultra-metric triangle inequality axiom (point 5. in Definition 1.1). This will be crucial for our proof that $L_\epsilon$ is sound and complete with respect to the class of ultra-metric spaces.

Now that we are done with the introductory remarks and the presentation of the main objectives of the first part this two-part work, we will start introducing the necessary technical background in the next section. In the second section, ...

\section{The ultra-metric space semantics of similarity}

The move from Lewis’s idea of similarity spheres (\cite{Lewis1979-LEWcount}) to the proposed ultra-metric semantics enables us to give a degree to a particular notion similarity between possible worlds. Building on ideas from topological semantics (\cite{mlspace}, \cite{Baltag2018ATA}), we develop a model in which worlds are represented as binary sequences of events, and their similarity is captured by an ultra-metric that reflects a hierarchically ordered structure of temporal divergence. This transition bridges the philosophical foundations of counterfactuals with a logic-based semantics grounded in metric and topological notions.

As we will see, the models for the logic of similarity $L_\epsilon$ will be ultra-metric space models. This establishes our proposed semantics for the multi-modal logic $L_\epsilon$ as a kind of \textit{topological semantics}.

Topological semantics (\cite{mlspace}, \cite{bluebook}, see also \cite{Baltag2018ATA}, \cite{Baltag2022JustifiedBK} for topological semantics for modal epistemic logic) connects modal logic with topology by interpreting quantifiers as specific subsets of a \textit{topological space}. 
\begin{mydef}
    A topological space is a pair $(X, \tau)$ where \(X\) is a nonempty set and \(\tau\) is a collection of subsets of \(X\) such that:
\end{mydef}
\begin{itemize}
    \item[1.] $\emptyset$ and \(X\) are in \(\tau\); 
    \item[2.] If $U, V \in \tau$, then $U \cap V \in \tau$; 
    \item[3.] If $U_i \in \tau$ is a collection of sets indexed by a set \(I\), then $\bigcup_{i \in I}U_i \in \tau$
\end{itemize}
With this definition in place, we can now introduce two key concepts in topological semantics:

\begin{mydef}
    Interior and closure of a set: 
\end{mydef}

For a subset \(S\) of a topological space \(X\), we define:
\begin{itemize}
    \item [1.] The interior of \(S\) as the union of all open subsets of \(S\) in \(X\);
    \item [2.] The closure of \(S\) as the intersection of all closed sets containing \(S\).
\end{itemize}

In topological semantics, a valuation is a function \(V\) that assigns to each propositional variable \(p\) a subset \(V(p) \subseteq X\), representing the set of points where \(p\) is true. The valuation of the usual modal quantifiers is then given as follows:
\begin{itemize}
    \item \(\square \phi\) is true at a point \(x \in X\) if and only if \(x\) belongs to the interior of the set \([\phi]\), where \([\phi]\) is the set of points in \(X\) where \(\phi\) is true;
    \item \(\Diamond \phi\) is true at \(x \in X\) if and only if \(x\) belongs to the closure of the set \([\phi]\).
\end{itemize}

With these definitions established, we can now define:

\begin{mydef}
    A topological space model is a triple \((X, \tau, V)\), where \((X, \tau)\) is a topological space and \(V\) is a valuation function \(V: P \rightarrow \mathcal{P}(X)\), with \(P\) being a set of propositional variables.
\end{mydef}
Our goal is to develop a semantics for the logic of stability, \(L_\epsilon\), based on ultra-metric spaces. This approach is more specialized than general topological semantics, as the evaluation of formulas in \(L_\epsilon\) is restricted to this specific class of metric spaces. In the next section, we introduce a key concept in this paper: the ultra-metric space model, which forms the foundation of the ultra-metric space semantics for \(L_\epsilon\).

\subsection{Ultra-metric space models}
First and foremost we introduce the language  $\mathcal{L}_{\epsilon}$, given by the following BNF grammar:
\begin{equation}
    \phi: = p \ | \ \phi \land \psi \ |\  \neg \psi \ |\  \square_{\epsilon} \psi \text{ for every $\epsilon\in G \subseteq [0,1]$}.
\end{equation}
Where $G$ is a enumerable subset of $G$ which includes 0 and 1. Such a language is precisely a multi-modal language, in the style discussed in \cite{ESTEVA1997235} \footnote{Note that our approach is similar yet relevantly different from \cite{ESTEVA1997235}, given that the degrees in $L_{\square_\epsilon}$ are meant to represent - in particular - distances in an ultra-metric space and the soundness and completeness of $L_{\square_\epsilon}$ is proved with respect to the class of ultra-metric spaces.}. 
\begin{mydef}
    Let $(X,d)$ be an  ultra-metric space. Let $V:\mathsf{Prop}\to \mathcal{P}(X)$ be a function; we call this a \textit{valuation}. We refer to the triple $(X,d, V)$ as a \textit{model} \footnote{We can also define the useful notion of an Ultra-metric frame $\mathcal{F} = (W, R_\epsilon)_{\epsilon \in D \subseteq G \subseteq [0,1]}$ such that for a corresponding ultra-metric space $(\mathcal{U}, d)$: $W = \mathcal{U}$ and, for any $w, v \in W$,  $w R_\epsilon v$ iff $d(w, v) \leq \epsilon$} of our language, and define the interpretation of all formulas, at a given point, as follows:
    \begin{enumerate}
        \item For any formula $\phi$, we call $[\phi]_V = \{x \in X | (X, d, V) \Vdash \phi\}$ the \textit{truth-set} of $\phi$;
        \item For every $p\in \mathsf{Prop}$, $(X,d,V),x\Vdash p$ if and only if $x\in [p]_V$;
        \item $(X,d,V),x\Vdash \neg\phi$ if and only if $x \notin [\phi]_V$.
        \item $(X,d,V),x\Vdash \phi\wedge \psi$ if and only if $x \in [\phi]_V \cap [\psi]_V$ 
        \item $(X,d,V),x\Vdash \square_{\epsilon}\psi$ if and only if $x \in [\square_\epsilon \psi] = \{y| d(x, y) \leq \epsilon \Rightarrow y \in [\psi]_V\}$
\end{enumerate}
\end{mydef}
In terms of our semantics of similarity then $\square_\epsilon \phi$ being true at a world $w$ can be interpreted to mean that the truth of $\phi$ is stable to a degree of $\epsilon$ in $w$ iff for all worlds that are at most $\epsilon-$distant/dissimilar from $w$ $\phi$ still holds. This matches our initial intuition that more stable truths resist greater changes in current conditions if we interpret a move to a different world as a change in circumstances, where the more distant the world the greater the changes.

Compare the last clause with the usual topological semantics of modal logic (see \cite{mlspace}, \cite{Baltag2018ATA}, \cite{Baltag2022JustifiedBK}): there, given a topological model $\mathcal{M}$, we normally say that \begin{equation*}
    \mathcal{M},x \Vdash \square \psi
\end{equation*}
holds if and only if $x\in Int([\psi])$. Spelling this out, it means that there exists some open set $U$, such that $x\in U$, and $U$ is entirely contained in the truth set of $\psi$. Our definition strengthens this notion by requiring that not only there exists an open set, but in fact an open set which is ``appropriately sized", in the sense that $\psi$ will only be true in the worlds that are at most $\epsilon$-similar to the actual world $w$. This is what will allow to quantify similarity between worlds. 

As such, the operator $\square_{\epsilon}$ can be read as picking up \textit{all} the worlds that are most $\epsilon$-dissimilar from the relevant world. And the operator $\Diamond_{\epsilon}$ can be defined as the Boolean dual, and read as picking \textit{some} world that is at least $1-\epsilon$-similar from the relevant world.

In the following subsection, we will introduce a specific instance of an ultra-metric space model — the Cantor ultra-metric space model - and explain why this model is especially well-suited for capturing a particular notion of similarity between possible worlds.
\subsection{The Cantor Ultra-metric Space Model}
In this section, we introduce the Cantor space, which can be thought of as the space of infinite binary sequences, alongside an ultra-metric distance function defined between these sequences. These elements provide the intended model of the set of possible worlds and the degree of similarity between them.

The core idea of this model is to represent each possible world as an infinite binary sequence encoding a history of events. The degree of similarity between worlds is then determined by the extent to which their event histories align. To clarify this approach, we will begin by outlining the necessary definitions.
\begin{mydef}
    \textit{\textbf{Cantor Space: }}The Cantor space may be described as the topological product of countably many copies of the discrete space $\{0,1\}$. 
\end{mydef}
We can conclude that a point in the Cantor space is an infinite sequence of binary digits. 
\begin{mydef}
    \textit{\textbf{Ultra-metric Cantor Space:}} The Ultra-metric Cantor Space is a pair $(\mathcal{C}, d)$, where $\mathcal{C}$ is the Cantor space and $d$ is a distance function $d: \mathcal{C} \times \mathcal{C} \rightarrow \mathbb{R}$ defined as follows: If $x, y \in \mathcal{C}$ - where $x$ and $y$ are then infinite binary sequences - then $d(x, y) = 1/2^n$, where $n$ is the first index where the sequences $x$ and $y$ differ.
\end{mydef}
It is straightforward to check that $d$ is indeed an ultra-metric distance function and that two points are the more distant the more the corresponding infinite binary sequences differ. 

We will now provide the interpretation of the Cantor space model according to our semantics of similarity and stability of truth, which should be compatible with the stability interpretation of the quantifier $\square_\epsilon$. Consider an infinite sequence of events $\{E_1, E_2, ..., E_n, ...\}$ indexed by the natural numbers. We can represent a possible world as an infinite binary sequence, where a $0$ at index $i$ indicates that event $E_i$ did not occur, and a $1$ signifies that event $E_i$ did occur. The Cantor space consists of such infinite binary sequences, which can be derived from an infinite binary tree. Using a simple diagram of a finite binary tree as an example, we can illustrate how we generate a set of possible worlds and the corresponding similarity distances in our model, interpreting these worlds as histories of events. 
\begin{center}

\begin{figure}[ht]
    \centering
    \begin{tikzcd}
        & & t_0 & {} \arrow[ld, "E_0"'] \arrow[rd, "\neg E_0"] & & & & \\
        & t_1 & {} \arrow[ld, "E_1"'] \arrow[rd, "\neg E_1"] & & {} \arrow[d, "E_1"'] \arrow[rd, "\neg E_1"] & & & \\
t_2 & {} \arrow[ld, "E_2"'] \arrow[d, "\neg E_2"'] & & {} \arrow[ld, "E_2"'] \arrow[d, "\neg E_2"'] & {} \arrow[d, "E_2"'] \arrow[rd, "\neg E_2"'] & {} \arrow[rd, "E_2"'] \arrow[rrd, "\neg E_2"] & & \\
w_0 & w_1 & w_2 & w_3 & w_4 & w_5 & w_6 & w_7
    \end{tikzcd}
    \caption{}
    \label{fig:event-tree}
\end{figure}

\end{center}
Where each level of the tree corresponds to a time $t_i$, at which an event $E_i$ happens - represented by a branching to the left - or doesn't happen - in which case $\neg E_i$ happens and we have a branching to the right. 

We can represent the resulting finite space of binary sequences by $\mathcal{U}$, where:

\begin{itemize}
    \item $w_0 = (1,1,1)$, $w_1 = (1,1,0)$, $w_2= (1, 0, 1)$, $w_3=(1, 0, 0)$, $w_4=(0, 1, 1)$, $w_5 = (0, 1, 0)$; $w_6= (0, 0, 1)$, $w_7 = (0, 0, 0)$;
    \item $d(w_0, w_1)=d(w_2, w_3) =d(w_4, w_5) = d(w_6, w_7) = 1/2^3$, $d(w_0, w_2) = d(w_0, w_3)=1/2^2$, $d(w_0, w_4)=d(w_0, w_5)=d(w_0, w_6) = d(w_0, w_7) = 1/2$, and so on.
\end{itemize}

Based on the ultra-metric space model $(\mathcal{U}, d)$ - where $\mathcal{U} = \{w_1, w_2, w_3, w_4, w_5, w_6\}$ and $d$ is the distance between defined as explained, we can define the Ultra-metric space model $\mathcal{M}=(\mathcal{U}, d, V)$, by introducing a valuation $V:\{p, q\} \rightarrow \mathcal{P}(X)$, where \(p\) and $q$ are propositional variables, such that: 
\begin{itemize}
    \item $[p]_v = \{w_0, w_1\}$; 
    \item $[q]_v = \{w_0, w_1, w_2, w_3\}$
\end{itemize}
Then we can see that the following holds: 
\begin{itemize}
    \item $M, w_0 \Vdash \square_{0.125}  p$ because for all \(w\) such that $d(w_0, w) \leq 0.125$ we have that $M, w \Vdash p$;
    \item $M, w_0 \Vdash \square_{0.25} q$ because for all \(w\) such that $d(w_0, w) \leq 0.25$ we have that $M, w \Vdash q$;
\end{itemize}

In order to have a more intuitive picture of these models take \(p\) to denote the proposition ``It rains" and \(q\) to denote the proposition ``there is a thunderstorm". Then at the world $w_0$ the truth of the proposition ``there is a thunderstorm" is more stable than the truth of the proposition ``it rains". This can be interpreted as conveying the fact that, the circumstances in the world $w_0$ could change more and the proposition ``there is a thunderstorm" still be true, than they could change while still preserving the truth of the proposition``it rains".

It is clear that the example just provided is merely a toy example. In fact, it is meant to provide a simple illustration of the infinite case which we will call the Cantor ultra-metric space model. This model is a particular case of an ultra-metric space model and - as we will see -  remarkably well suited suited to our semantics of similarity and stability. Later we will see that this model is not modally definable, and hence it is indistinguishable from any other ultra-metric space model in terms of the language $\mathcal{L}_\epsilon$. 

\begin{mydef}
    \textbf{Cantor ultra-metric space model}: An ultra-metric space model $\mathsf{C} = ((\mathcal{C}, d), V)$ where $(\mathcal{C}, d)$ is the Cantor space with the ultra-metric space distance $d$ defined between sequences, and $V$ is a valuation function $V:\mathsf{Prop}\to \mathcal{P}(\mathcal{C})$. The interpretation of formulas $\phi \in \mathcal{L}_\epsilon$ is the same as in Definition 2.4.
\end{mydef}
It is straightforward to see that $\mathsf{C}$ is an ultra-metric space model. It is in fact a particular case of an ultra-metric space model, with the following features and corresponding semantic interpretation: 
\begin{itemize}
    \item $\mathcal{C}$ is the set of infinite binary sequences. In our semantics this corresponds to possible worlds being interpreted as infinite sequences where a $0$ at index $i$ means that event $E_i$ didn't happen at time $t_i$ and a 1 means that it did;
    \item $d(x, y) = 1/2^n$ where $x$ and $y$ are sequences in $\mathcal{C}$ and $n$ is the first index in which they differ. In our semantics $n$ determines the first point in time $t_n$ where $x$ and $y$ diverged as histories of events. The greater the $n$ the earlier they diverged and the more distant they are. 
\end{itemize}

Note that the proposed models introduce a hierarchical structure to the set of possible worlds, where the distance between two worlds reflects how early their event histories diverged. A way to visualize this is to consider the infinite binary tree generalizing the illustrative example in Fig.?. This allows us to present the more refined semantics of similarity with respect to the Cantor ultra-metric space model:
\begin{itemize}
    \item Two worlds $w$ and $v$ are the more similar the earlier they diverged in terms of their event histories;
    \item A proposition is more stable in a given world $w$ the earlier the worlds diverged in terms of their event histories from $w$ while $\phi$ is still true. 
\end{itemize}

We are now in a position to interpret the intuitive idea of a ``change in circumstances" in more precise terms. I.e, we say that two worlds are more different in terms of their circumstances or conditions the earlier they diverged in terms of their event histories.

An important feature of ultra-metric space models in general and of the cantor ultra-metric space model in particular is that the hierarchical organization is layered, meaning that from the perspective of a particular world, each layer consists of worlds that are all at the same distance from it. Each layer represents a degree of similarity, with the more distant layers corresponding to worlds that diverged earlier in time. This framework, based on ultra-metric spaces, allows for a formalized way of reasoning about how the similarity of one world relative to another depends on their historical proximity.

Here we should note that the structure of ultra-metric spaces imposes a certain universality of the stability of truth that the structure of metric spaces in general doesn't impose. To see what we mean by universality, we look again at the example above and notice that, in all the worlds where either \(p\) or \(q\) is true it is stably true to the same degrees. This is due to the following characteristic of ultra-metric spaces: 
\begin{proposition}
    Every point in a ball in an ultra-metric space is its center.
\end{proposition}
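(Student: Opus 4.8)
The plan is to unpack what ``being a center'' means and then reduce everything to a single use of the strong triangle inequality (axiom 5 of Definition 1.1). First I would fix notation: for $x \in X$ and $r \geq 0$, write $B(x,r) = \{\, z \in X : d(x,z) \leq r \,\}$ for the (closed) ball of radius $r$ about $x$; the open-ball case, with $<$ in place of $\leq$, is handled by the identical argument, so I would mention it once and then work with closed balls. The claim ``every point of a ball is its center'' then means precisely: if $y \in B(x,r)$, then $B(y,r) = B(x,r)$, since $y$ would then serve as a center of that very set.

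The key step is to prove the two inclusions $B(x,r) \subseteq B(y,r)$ and $B(y,r) \subseteq B(x,r)$ under the hypothesis $y \in B(x,r)$, i.e. $d(x,y) \leq r$. For the first, take any $z \in B(x,r)$, so $d(x,z) \leq r$. Applying axiom 5 to the triple $(y,z,x)$ gives $d(y,z) \leq \max\{\, d(y,x),\, d(x,z)\,\}$; by symmetry $d(y,x) = d(x,y) \leq r$, and $d(x,z) \leq r$ by assumption, so $d(y,z) \leq r$, i.e. $z \in B(y,r)$. For the reverse inclusion, note that $d(x,y) \leq r$ also says $x \in B(y,r)$, so the situation is symmetric in $x$ and $y$: repeating the previous paragraph with the roles of $x$ and $y$ exchanged yields $B(y,r) \subseteq B(x,r)$. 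Combining the two inclusions gives $B(x,r) = B(y,r)$, which is the proposition.

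I do not expect a genuine obstacle here — the statement is an immediate corollary of the ultra\-metric inequality, and the proof is essentially one line once the definitions are spelled out. The only points requiring care are bookkeeping ones: matching the labels in axiom 5, which is written $d(x,y) \leq \max\{d(x,z), d(y,z)\}$, to the triple $(y,z,x)$ used above, invoking symmetry to replace $d(y,x)$ by $d(x,y)$, and checking that the same chain of inequalities goes through verbatim for balls defined by strict inequality. I would also remark, as a corollary relevant to the surrounding discussion, that any two balls in an ultra\-metric space are either disjoint or one contains the other, and that this is exactly the ``layered'' structure invoked just before the proposition.
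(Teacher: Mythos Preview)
Your proof is correct and follows exactly the approach the paper gestures at: the paper's proof is a one-line appeal to the ultra\-metric inequality $d(w,u) \leq \max\{d(w,v), d(v,u)\}$ with a reference to Schikhof, and you have simply spelled out the two-inclusion argument that this inequality yields. There is nothing to add or correct.
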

\begin{proof}
    This follows from the fact that for any points $w, u, v$ in an ultra-metric space $d(w, u) \leq max\{d(w, v), d(v, u)\}$ (see \cite{Schikhof_1985}).
\end{proof}

Note that, from the semantics of the quantifier $\square_\epsilon$ and its truth conditions in the ultra-metric space model, one can conclude that the degree of stability of a proposition is equivalently given by the radius of the largest ball around that world in which the proposition holds. Moreover, since every point in an ultra-metric space is the center of a ball, all worlds within this ball are equidistant from one another. Consequently, the proposition maintains the same degree of stability across all these worlds.

Another consequence of the particular structure of ultra-metric spaces on the logic is that, whenever both \(p\) and \(q\) are true in a certain ``region" of the ultra-metric space, it follows that in this region the implication $p \rightarrow q$ is also true. This follows from the following property of ultra-metric spaces: 
\begin{proposition}
    If $\mathcal{U}$ and $\mathcal{V}$ are subsets of an ultra-metric space, then if $\mathcal{U} \cap \mathcal{V} \neq \emptyset$ it follows that either $\mathcal{U} \subseteq \mathcal{V}$ or $\mathcal{V} \subseteq \mathcal{U}$. 
\end{proposition}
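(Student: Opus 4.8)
The plan is to read the statement in the way that makes it true and that is actually the one used in the surrounding discussion: \(\mathcal{U}\) and \(\mathcal{V}\) are \emph{balls} of the ultra-metric space (this is the case relevant to us, since the ``regions'' witnessing stability are balls \(\{x:d(w,x)\le\epsilon\}\); for genuinely arbitrary subsets the claim is false, e.g.\ \(\{0,1\}\) and \(\{0,2\}\) in the \(2\)-adic metric). So I would write \(\mathcal{U}=B(a,r)=\{x\in X: d(a,x)\le r\}\) and \(\mathcal{V}=B(b,s)=\{x\in X: d(b,x)\le s\}\), and assume without loss of generality \(r\le s\).

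The key step is re-centering both balls at a common point. Pick \(c\in\mathcal{U}\cap\mathcal{V}\), which exists by hypothesis. Applying the preceding proposition (every point of a ball in an ultra-metric space is its center) twice, once inside \(\mathcal{U}\) and once inside \(\mathcal{V}\), gives \(\mathcal{U}=B(c,r)\) and \(\mathcal{V}=B(c,s)\). Now both balls share the centre \(c\), so the inclusion is purely a matter of comparing radii: if \(d(c,x)\le r\) then \(d(c,x)\le s\), hence \(B(c,r)\subseteq B(c,s)\), i.e.\ \(\mathcal{U}\subseteq\mathcal{V}\). (Symmetrically, \(s\le r\) yields \(\mathcal{V}\subseteq\mathcal{U}\); and when \(r=s\) the two balls coincide and both inclusions hold, so in every case one of the two inclusions is obtained.)

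I do not expect a real obstacle here: once the ``every point is a centre'' proposition is available, the argument is the two-line re-centering above. The only points needing care are (i) recording explicitly that the proposition is about balls, not arbitrary subsets, and (ii) if one also wants the open-ball version \(\{x:d(a,x)<r\}\), noting that both the centring proposition and the radius-monotonicity step go through verbatim, since both rely only on the ultra-metric inequality \(d(x,y)\le\max\{d(x,z),d(y,z)\}\) of Definition~1.1 — the same inequality already invoked for the centring proposition.
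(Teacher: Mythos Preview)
Your proposal is correct. You are right that the proposition, read literally for arbitrary subsets, is false (your $2$-adic example is apt); the intended reading is indeed about balls, and your re-centering argument via the preceding ``every point is a centre'' proposition is clean and complete.

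As for comparison with the paper: the paper's own proof is a one-line appeal to the ultra-metric triangle inequality with a citation to Schikhof, so there is essentially nothing to compare against at the level of argument structure. Your route through Proposition~1 is a natural decomposition of that same idea --- Proposition~1 is itself an immediate consequence of the strong triangle inequality, and once both balls are re-centred at a common point the inclusion is trivial. One could equally well bypass Proposition~1 and argue directly: with $c\in B(a,r)\cap B(b,s)$ and $r\le s$, for any $x\in B(a,r)$ one has $d(b,x)\le\max\{d(b,c),d(c,x)\}\le\max\{s,\max\{d(c,a),d(a,x)\}\}\le s$. Both arguments are the same inequality unpacked slightly differently; your version has the virtue of explicitly reusing the preceding proposition, which makes the logical dependencies in the paper clearer. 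Your remark~(i), that the statement must be restricted to balls, is a genuine correction worth recording.
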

\begin{proof}
    This again follows from the ultra-metric triangle inequality condition (see \cite{Schikhof_1985}).
\end{proof}

To see why the above is the case, suppose that for a given ultra-metric space model $M^{\mathcal{U}}= (\mathcal{U}, v)$, we have that $M^{\mathcal{U}}, w \Vdash p \land q$. Then $w \in [p]_v \cap [q]_v$, and therefore either $[p]_v \subseteq [q]_v$ or $[q]_v \subseteq [p]_v$. In the first case we have $M^{\mathcal{U}}, w \Vdash p \rightarrow q$ and in the second $M^{\mathcal{U}}, w \Vdash q \rightarrow p$. 

This universality of stability imposed by the structure of the ultra-metric space model can be explained in light of the interpretation of possible worlds as histories of events. 
Two other topological properties of ultra-metric spaces that are of fundamental importance for our semantics are the ones stated in the following proposition.
\begin{proposition}
    \begin{itemize}
    \item Every Ultra metric space is zero-dimensional;
    \item Every Ultra metric space is Hausdorff.
\end{itemize}
\end{proposition}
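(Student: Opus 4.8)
The plan is to derive both clauses directly from the strong (ultra-metric) triangle inequality, Definition 1.1(5), together with the ordinary metric axioms. I will work with the metric balls $B_{<r}(x)=\{y\in X: d(x,y)<r\}$ and $B_{\le r}(x)=\{y\in X: d(x,y)\le r\}$ for $r>0$, since the former generate the metric topology of $(X,d)$ while the latter are exactly the sets appearing in the truth clause for $\square_\varepsilon$.

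First I would isolate the key observation as a one-line lemma: for each fixed $r>0$ the relation $x\approx_r y \iff d(x,y)\le r$ is an equivalence relation on $X$. Reflexivity and symmetry are immediate from Definition 1.1(1)--(3), and transitivity is precisely an instance of the ultra-metric inequality, since $d(x,y)\le r$ and $d(y,z)\le r$ force $d(x,z)\le\max\{d(x,y),d(y,z)\}\le r$; the identical argument works with $<$ in place of $\le$. Hence the balls of a fixed radius are the equivalence classes of $\approx_r$, so they partition $X$. In particular, the complement of $B_{<r}(x)$ is the union of the remaining radius-$r$ balls, hence open, so $B_{<r}(x)$ is closed as well as open; the same partition argument shows $B_{\le r}(x)$ is clopen, which is what makes the $\square_\varepsilon$-balls well behaved.

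For zero-dimensionality I would then conclude that $\{B_{<r}(x): x\in X,\ r>0\}$ is a basis for the metric topology consisting of clopen sets, i.e.\ $\operatorname{ind} X = 0$ (for a metrizable, hence $T_1$, space, having such a basis is the standard characterisation of zero-dimensionality; some authors take this basis property as the definition, in which case there is nothing to reconcile). For the Hausdorff property, given $x\ne y$ set $r=d(x,y)>0$; a common point $z\in B_{<r}(x)\cap B_{<r}(y)$ would give $d(x,y)\le\max\{d(x,z),d(z,y)\}<r$, a contradiction, so $B_{<r}(x)$ and $B_{<r}(y)$ are disjoint open neighbourhoods of $x$ and $y$. (Alternatively one could simply note that Definition 1.1 makes every ultra-metric a metric, and every metric space is Hausdorff, but the explicit disjoint clopen balls are more in the spirit of the paper.)

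There is no substantial obstacle here: each step is a two-line use of the strong triangle inequality. The only genuine care needed is bookkeeping — fixing once and for all which notion of ball is intended, applying the partition argument only to balls of a \emph{common} radius (balls of different radii about the same centre are nested, not disjoint), and being explicit about which of the equivalent formulations of ``zero-dimensional'' is being verified. I will therefore state the equivalence-class lemma separately at the start, so that both clauses of the proposition, as well as the later appeals to clopen $\square_\varepsilon$-balls, follow from it uniformly.
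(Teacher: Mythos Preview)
Your proposal is correct. It is close in spirit to the paper's argument but more explicit and self-contained. The paper gives only a sketch: for Hausdorff it simply invokes the standard fact that every metric space is Hausdorff; for zero-dimensionality it appeals to Proposition~1 (``every point in a ball is its centre'') to conclude that open balls are clopen, then cites Schikhof for details. Your equivalence-relation lemma is effectively a repackaging of Proposition~1 --- transitivity of $\approx_r$ is exactly the statement that $y\in B_{\le r}(x)$ forces $B_{\le r}(y)=B_{\le r}(x)$ --- but your formulation makes the partition-into-clopens conclusion immediate without the intermediate proposition, and it also yields the explicit disjoint clopen neighbourhoods for Hausdorff rather than a bare citation. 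Both routes rest on the same single use of the strong triangle inequality; yours just unpacks it in one place and reuses it, which fits your stated aim of having the later $\square_\varepsilon$-ball facts follow uniformly.
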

\begin{proof}
    Note that every metric space is Hausdorff. The fact that any ultra-metric space has a basis of clopens follows from the fact that every open ball is clopen, as implied by Proposition 1. See \cite{Schikhof_1985} for details.
\end{proof}

The interpretation of these properties in the context of our similarity semantics is given in the context of the Cantor space with the ultra metric distance. In fact, it is well known that the Cantor space is Compact, Hausdorff and zero-dimensional. In order to understand clearly how these properties are interpreted in our semantics it is essential to understand first how the basic opens, opens and closed sets in the Cantor space look like. 
\begin{itemize}
    \item Basic opens: Given that the Cantor space is homeomorphic to $\{0,1\}^{\mathbb{N}}$ with the product topology, the basic opens are cylinder sets. These are in particular sets of sequences that share a finite prefix;
    \item Open sets are - as is well known - unions of basic open sets. In the context of the Cantor space these are sets of sequences that share \textit{some} of many finite prefixes. 
    \item Closed sets are complements of open sets so, in this context, sets of sequences that don't share finite prefixes from a given set of finite prefixes. 
\end{itemize}
Remember that in our semantics the possible worlds, are the elements of the Cantor space, i.e, the binary sequences which we interpreted in terms of histories of events. 
The Cantor ultra-metric space is not only compact, zero-dimensional and Hausdorff, but also perfect (it has no isolated points). This topological feature is important for the intended semantics. Perfection guarantees that for every world $w$ and every radius $\epsilon > 0$ there exist other worlds within distance $\epsilon$ of $w$. Consequently, the modal operators $\square_\epsilon$ and $\Diamond_\epsilon$ quantify over genuinely nontrivial neighborhoods rather than collapsing to singleton tests about the actual world. Philosophically, this matches the idea that circumstances can always be varied slightly; technically, it prevents degenerate validity patterns that occur in ultra-metric models with isolated points (where, for some $\epsilon$, $\Diamond_\epsilon$ may simply restate propositional truth).

\section{The logic of stability $L_\epsilon$}
\subsection{Syntax and Semantics}

First and foremost we introduce the language  $\mathcal{L}_{\square_\epsilon}$, given by the following BNF grammar:
\begin{equation}
    \phi: = p \ | \ \phi \land \psi \ |\  \neg \psi \ |\  \square_{\varepsilon} \psi \text{ for every $\varepsilon\in G \subseteq [0,1]$}.
\end{equation}
Where $G$ is a enumerable subset of $G$ which includes 0 and 1. Such a language is precisely a multi-modal language, in the style discussed in \cite{ESTEVA1997235} \footnote{Note that our approach is similar yet relevantly different from \cite{ESTEVA1997235}, given that the degrees in $L_{\square_\epsilon}$ are meant to represent - in particular - distances in an ultra-metric space and the soundness and completeness of $L_{\square_\epsilon}$ is proved with respect to the class of ultra-metric spaces.}. 

Our semantics of it will involve the familiar concept of a \textit{metric space}.

\begin{mydef}
Let $(X,d)$ be a set equipped with a function $d:X^{2}\to \mathbb{R}$. We say that $d$ is a \textit{metric}, and $(X,d)$ is a \textit{metric space}, if $d$ satisfies the following properties:
\begin{enumerate}
    \item (Positivity and Equality) For each $x,y\in X$, $d(x,y)=0$ if and only if $x=y$.
    \item (Symmetry) For all $x,y\in X$, $d(x,y)=d(y,x)$.
    \item (Triangle Inequality) For all $x,y,z\in X$, $d(x,z)\leq d(x,y)+d(y,z)$.
\end{enumerate}
We say that $d$ is an \textit{ultrametric}, and $(X,d)$ is an \textit{ultrametric space} if $d$ satisfies the following variant of the triangle inequality:
\begin{enumerate}
    \item[3'.] For all $x,y,z\in X$, $d(x,z)\leq \max(d(x,y),d(y,z))$. 
\end{enumerate}
\end{mydef}

\begin{mydef}
    Let $(X,d)$ be an  ultra-metric space. Let $v:\mathsf{Prop}\to \mathcal{P}(X)$ be a function; we call this a \textit{valuation}. We refer to the triple $(X,d, V)$ as a \textit{model} \footnote{We can also define the useful notion of an Ultra-metric frame $\mathcal{F} = (W, R_\epsilon)_{\epsilon \in D \subseteq G \subseteq [0,1]}$ such that for a corresponding ultra-metric space $(\mathcal{U}, d)$: $W = \mathcal{U}$ and, for any $w, v \in W$,  $w R_\epsilon v$ iff $d(w, v) \leq \epsilon$} of our language, and define the interpretation of all formulas, at a given point, as follows:
    \begin{enumerate}
        \item For every $p\in \mathsf{Prop}$, $(X,d,V),x\Vdash p$ if and only if $x\in V(p)$;
        \item $(X,d,V),x\Vdash \neg\phi$ if and only if $(X,d,V),x\nVdash \phi$.
        \item $(X,d,V),x\Vdash \phi\wedge \psi$ if and only if $(X,d,V),x\Vdash \phi$ and $(X,d,V),x\Vdash \psi$.
        \item $(X,d,V),x\Vdash \Box_{\varepsilon}\psi$ if and only if for each $y$ such that $d(x,y)\leq \varepsilon$, then $(X,d,V),y\Vdash \psi$.
\end{enumerate}
\end{mydef}
These interpretations can be restated in terms of truth sets as follows: 
\begin{itemize}
    \item $[\phi] = \{ x \in X \mid (X, d, V), x \Vdash \phi \}$;
    \item $[\neg \phi] = X \setminus\{ x \in X \mid (X, d, V), x \Vdash \phi \}$;
    \item $[\phi \land \psi] = [\phi] \cap [\psi]$;
    \item $[\square_\epsilon \phi] =  \{x \in X: \forall y(d(x, y) \leq \epsilon \Rightarrow y \in A)\}$
\end{itemize}

Compare the last clause with the usual topological semantics of modal logic (see \cite{mlspace}, \cite{Baltag2018ATA}, \cite{Baltag2022JustifiedBK}): there, given a topological model $\mathcal{M}$, we normally say that \begin{equation*}
    \mathcal{M},x \Vdash \square \psi
\end{equation*}
holds if and only if $x\in Int([\psi])$. Spelling this out, it means that there exists some open set $U$, such that $x\in U$, and $U$ is entirely contained in the truth set of $\psi$. Our definition strengthens this notion by requiring that not only there exists an open set, but in fact an open set which is ``appropriately small", in the sense that $\psi$ will only be true in the worlds that are at most $\epsilon-dissimilar$ - or equivalently, $1-\epsilon$ similar - to the actual world $w$.

As such, the operator $\square_{\varepsilon}$ can be read as picking up \textit{all} the worlds that are most $\epsilon-dissimilar$ from the relevant world. And the operator $\Diamond_{\epsilon}$ can be defined as the Boolean dual, and read as picking \textit{some} world that is at most $\epsilon-dissimilar$ from the relevant world. 

\subsection{Axiomatisation}

In this section we present an axiomatisation for a logic $L_\epsilon$ which is intended to capture ultrametric spaces as above. Our axiomatisation mirrors that of \cite{ESTEVA1997235}, given our overall goal is similar. The relevant difference between our axiomatization and theirs is our use of ultra-metric space semantics, which will also figure in the soundness and completeness proofs.

We introduce the following two subsets of the ultra-metric space $(X, d)$, for any arbitrary $A \subseteq X$: 
\begin{itemize}
    \item $I_\varepsilon(A) = \{x \in X : \forall y (d(x, y) \leq \epsilon \Rightarrow y \in A)\}$
    \item $C_\varepsilon(A) = \{x \in X : \exists y (d(x, y) \leq \epsilon \land y \in A)\}$
\end{itemize}
So it is clear that: 
\begin{itemize}
    \item $[\square_\epsilon \phi] = I_\epsilon([\phi])$
    \item $[\Diamond_\epsilon \phi] = C_\epsilon([\phi])$
\end{itemize}

\begin{proposition}
$C_\epsilon$ and $I_\epsilon$ on the set $U$ are then functions $C_\epsilon: \mathcal{P}(U) \rightarrow \mathcal{P}(U)$ and $I_\epsilon: \mathcal{P}(U) \rightarrow \mathcal{P}(U)$ from the power set of $U$ to itself, which satisfy the following conditions for all the sets $A, B \subseteq U$:
 \begin{itemize}
     \item[(i)] $I_\epsilon(A) \subseteq I_\gamma(A)$ for all $\epsilon \geq \gamma$.
     \item[(ii)]  $I_{\epsilon}I_{\gamma}(A) = I_{max(\epsilon,  \gamma)}$
     \item[(iii)] $I_0(A)=A$
     \item[(iv)] $I_\epsilon(A) \subseteq A$
     \item[(v)] $I_\epsilon(A \cap B)=I_\epsilon(A) \cap I_\epsilon(B)$
     \item[(vi)] $A \subseteq C_\epsilon(A)$ 
    \item[(vii)] $C_\epsilon(A) \subseteq I_\epsilon(C_\epsilon(A))$
     \item[(viii)] $(I_\epsilon(A^C))^C = C_\epsilon(A)$ 
      \item[(ix)] $A \subseteq I_\epsilon(C_\epsilon(A))$
 \end{itemize}
 \end{proposition}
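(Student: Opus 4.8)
The plan is to unwind the definitions of $I_\epsilon$ and $C_\epsilon$, isolate the two items where the ultra-metric (strong) triangle inequality is genuinely needed — namely (ii) and (vii) — and obtain everything else either from the elementary facts $d(x,x)=0$ and $d(x,y)=0\iff x=y$, or from items already proved by Boolean manipulation. Throughout I will write $B_\epsilon(x):=\{y\in X: d(x,y)\leq\epsilon\}$ for the closed ball, so that $I_\epsilon(A)=\{x: B_\epsilon(x)\subseteq A\}$ and $C_\epsilon(A)=\{x: B_\epsilon(x)\cap A\neq\emptyset\}$; that these are total maps $\mathcal{P}(U)\to\mathcal{P}(U)$ is immediate. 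The \emph{soft} items I would dispatch first: (i) is monotonicity of the ball in its radius ($\gamma\leq\epsilon\Rightarrow B_\gamma(x)\subseteq B_\epsilon(x)$); (iii) uses $B_0(x)=\{x\}$; (iv) uses $x\in B_\epsilon(x)$; (v) is the distribution of the universal quantifier over $\wedge$; (vi) again uses $x\in B_\epsilon(x)$ as a witness; (viii) is the De Morgan identity $x\notin I_\epsilon(A^C)\iff\exists y(d(x,y)\leq\epsilon\wedge y\in A)$; and (ix) is just the composition $A\subseteq C_\epsilon(A)\subseteq I_\epsilon(C_\epsilon(A))$ of (vi) and (vii).

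The real content is (ii), which I read as $I_\epsilon I_\gamma(A)=I_{\max(\epsilon,\gamma)}(A)$, and this is where I expect the only genuine obstacle. The inclusion $I_\epsilon I_\gamma(A)\subseteq I_{\max(\epsilon,\gamma)}(A)$ holds already in an arbitrary metric space: given $x$ with $B_\epsilon(x)\subseteq I_\gamma(A)$ and $z$ with $d(x,z)\leq\max(\epsilon,\gamma)$, either $d(x,z)\leq\epsilon$, so $z\in B_\epsilon(x)\subseteq I_\gamma(A)$ and then $z\in B_\gamma(z)$ gives $z\in A$; or $d(x,z)>\epsilon$, which forces $\max(\epsilon,\gamma)=\gamma$ and $z\in B_\gamma(x)$, and since $x\in B_\epsilon(x)\subseteq I_\gamma(A)$ we get $B_\gamma(x)\subseteq A$, so again $z\in A$. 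The converse $I_{\max(\epsilon,\gamma)}(A)\subseteq I_\epsilon I_\gamma(A)$ is exactly where ultra-metricity is indispensable: given $x$ with $B_{\max(\epsilon,\gamma)}(x)\subseteq A$, I must show $B_\gamma(y)\subseteq A$ whenever $d(x,y)\leq\epsilon$; for such a $y$ and any $z$ with $d(y,z)\leq\gamma$ the strong triangle inequality gives $d(x,z)\leq\max(d(x,y),d(y,z))\leq\max(\epsilon,\gamma)$, so $z\in A$. In an ordinary metric space this step only yields $d(x,z)\leq\epsilon+\gamma$, so the best available statement there would be $I_{\epsilon+\gamma}(A)\subseteq I_\epsilon I_\gamma(A)$ and the clean identity (ii) would break; recovering it is precisely the payoff of moving to ultra-metrics.

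Item (vii), $C_\epsilon(A)\subseteq I_\epsilon(C_\epsilon(A))$, is the same phenomenon on the dual side: if $a\in A$ witnesses $x\in C_\epsilon(A)$ and $d(x,y)\leq\epsilon$, then $d(y,a)\leq\max(d(y,x),d(x,a))\leq\epsilon$, so the very same $a$ witnesses $y\in C_\epsilon(A)$; hence $B_\epsilon(x)\subseteq C_\epsilon(A)$, i.e. $x\in I_\epsilon(C_\epsilon(A))$. Here I would flag explicitly, since it motivates the later axiomatisation, that (vii) fails badly in general metric spaces — e.g. in $\mathbb{R}$ one has $C_1(\{0\})=[-1,1]$ but $I_1([-1,1])=\{0\}$ — and that what rescues it is exactly Proposition 1 together with Proposition 2: the closed balls of a fixed radius partition an ultra-metric space, so a set of the form $C_\epsilon(A)$ is a union of such balls and is therefore equal to its own $\epsilon$-interior (combining with (iv)). With (ii) and (vii) in hand the remaining items are bookkeeping, and (viii)/(ix) follow formally as noted. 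The single step to present with care is the converse inclusion in (ii): that is the one place that genuinely consumes the ultra-metric axiom.
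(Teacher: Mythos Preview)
Your proposal is correct and follows essentially the same route as the paper: unwind the ball definitions, use $d(x,x)=0$ and De~Morgan for the soft items, and invoke the strong triangle inequality precisely at (ii) and (vii). The only differences are cosmetic improvements on your side --- you separate the two inclusions in (ii) and identify which one actually consumes ultra-metricity (the paper just writes a terse equality chain), and you obtain (ix) as the composite of (vi) and (vii) rather than reproving it directly as the paper does.
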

 \begin{proof}
\begin{itemize}
    \item[$(i)$] Suppose $x \in I_\epsilon(A)$, then $\forall y(d(x, y) \leq \epsilon \rightarrow y \in A)$. Equivalently, $B_\epsilon(x) \subseteq A$. Now take arbitrary $\gamma \leq \epsilon$. Because $B_\gamma(x) \subseteq B_\epsilon(x) \subseteq A$, it follows that $x \in I_\gamma(x)$.
    \item[$(ii)$] $I_\epsilon(I_\gamma(A)) = \{x \in X | \forall y(d(x, y) \leq \epsilon \rightarrow y \in I_\gamma(A))\} =  \{x \in X | \forall y(d(x, y) \leq \epsilon \rightarrow y \in \{y \in X| \forall z(d(y, z) \leq \gamma \rightarrow z \in A)\}\} = \{x \in X| \forall z(d(x, z) \leq max\{\epsilon, \gamma\} \rightarrow z \in A)\} = I_{\max\{\epsilon, \gamma\}}(A)$. 
    \item[$(iii)$] $I_0(A) = \{x \in X | \forall y(d(x, y) = 0 \rightarrow y \in A)\} = \{x \in X | \forall y(x = y \rightarrow y \in A)\} = \{x \in X | x \in A\} = A$
    \item[$(iv)$] This follows immediately from the definition of $I_\epsilon(A)$. 
    \item[$(v)$] $I_\epsilon(A \cap B) = \{x \in X : \forall y (d(x, y) \leq \epsilon \Rightarrow y \in A \cap B)\} = \{x \in X : \forall y (d(x, y) \leq \epsilon \Rightarrow y \in A \text{ and } y \in B)\} = \{x \in X : \forall y (d(x, y) \leq \epsilon \Rightarrow y \in A)\} \cap \{x \in X : \forall y (d(x, y) \leq \epsilon \Rightarrow y \in B)\} = I_\epsilon(A) \cap I_\epsilon(B)$.
    \item[$(vi)$]  If $x \in A$, then clearly $d(x, x) \leq \epsilon$ so $x \in C_\epsilon(A)$. 
    \item[$(vii)$]  Suppose $x \in C_\epsilon(A)$. Then $\exists y (d(x, y) \leq \epsilon \land y \in A)$. Now consider some arbitrary $z$ such that $d(x, z) \leq \epsilon$. Then $d(y, z) \leq max\{d(x, y), d(x, z)\}=\epsilon$. And because $y \in A$, then $z \in C_\epsilon(A)$. But then $x \in Int_\epsilon(C_\epsilon(A))$.
    \item[$(viii)$]  $(I_\epsilon(A^C))^C = X \setminus \{x \in X | \forall y(d(x, y) \leq \epsilon \rightarrow y \in X \setminus A)\} = \{x \in X | \exists y(d(x, y) \leq \epsilon \land y \notin \neg A)\} = C_\epsilon(A)$. 
    \item[$(ix)$] Suppose $x \in A$. Then take an arbitrary $y$ such that $d(x, y) \leq \epsilon$. Then clearly there is $z \in A$ such that $d(y, z) \leq \epsilon$ and $z \in A$, hence $y \in C_\epsilon(A)$, and therefore $x \in I_\epsilon(C_\epsilon(A))$.  
\end{itemize}
 
\end{proof}

\subsection{Soundness}
The axioms of the logic $L_{\square_\epsilon}$ are the following:
\begin{itemize}
\item[(K)]$\square_\epsilon(\phi \rightarrow \psi) \rightarrow \square_\epsilon \phi \rightarrow \square_\epsilon \psi$, for all $\epsilon \in [0, 1]$
\item[(T)] $\square_\epsilon \phi \rightarrow \phi$, for all $\epsilon \in [0, 1]$
($UM_1$)$\square_\epsilon \phi \rightarrow \Diamond_\epsilon \phi$, for all $\epsilon \in [0, 1]$
\newline
\newline
Inference rules:
\item[(MP)] From $\phi$ and $\phi \rightarrow \psi$ infer $\psi$;
\item[($UM$-Nec)] From $\phi$ infer $\square_\epsilon \phi$, for all $\epsilon \in [0, 1]$
Ultra-metric space axioms:
\item [(TI)] $\square_\gamma \square_\delta \phi \leftrightarrow \square_{max \{\gamma, \delta\}} \phi$, for all $\gamma, \delta \in [0, 1]$
\item[($UM_2$)] $\Diamond_\epsilon \phi \rightarrow \square_\epsilon \Diamond_\epsilon  \phi$, for all $\epsilon \in [0, 1]$
\item[($UM_3$)]$\square_\gamma \phi \rightarrow \square_\delta \phi$,  for all $\gamma, \delta \in [0, 1]$, with $\gamma \geq \delta$
\item[(D)]$\Diamond_\epsilon \phi \leftrightarrow \neg 
\square_\epsilon \neg \phi$, for all $\epsilon \in [0, 1]$
\item[($UM_4$)] $\phi \rightarrow \square_\epsilon \Diamond_\epsilon \phi$, for all $\epsilon \in [0, 1]$
\end{itemize}
 
\begin{proposition}
$L_{\square_\epsilon, \Diamond_\epsilon}$ is sound with respect to the class of Ultrametric Spaces.
 \end{proposition}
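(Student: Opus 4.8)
The plan is to establish soundness by checking the two standard ingredients separately: every axiom schema is \emph{valid} on every ultrametric space model, and the inference rules preserve validity. Recall that a formula $\phi$ is valid on $(X,d,V)$ exactly when $[\phi]=X$; an implication $\phi\to\psi$ is valid exactly when $[\phi]\subseteq[\psi]$; a biconditional $\phi\leftrightarrow\psi$ exactly when $[\phi]=[\psi]$; and by the semantics $[\square_\varepsilon\phi]=I_\varepsilon([\phi])$, $[\Diamond_\varepsilon\phi]=C_\varepsilon([\phi])$, with $I_\varepsilon(X)=X$. With these dictionaries in place the entire argument reduces to bookkeeping on top of Proposition 5, whose clauses (i)--(ix) already isolate the set-theoretic content of each axiom and, crucially, the points at which the strong triangle inequality $d(x,z)\le\max\{d(x,y),d(y,z)\}$ is invoked.

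Next I would run through the axioms, instantiating the sets $A,B$ in Proposition 5 by the relevant truth-sets. Axiom (K) unfolds to $I_\varepsilon(A^{c}\cup B)\cap I_\varepsilon(A)\subseteq I_\varepsilon(B)$, which follows by a one-line expansion of the definition of $I_\varepsilon$ (alternatively from monotonicity together with clause (v)). Axiom (T) is clause (iv), using $d(x,x)=0\le\varepsilon$. Axiom $(UM_1)$ follows by chaining clauses (iv) and (vi): $I_\varepsilon(A)\subseteq A\subseteq C_\varepsilon(A)$ (this is just seriality, automatic since $d(x,x)=0$). Axiom (TI) is exactly clause (ii), $I_\gamma I_\delta(A)=I_{\max\{\gamma,\delta\}}(A)$. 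Axiom $(UM_2)$ is exactly clause (vii), $C_\varepsilon(A)\subseteq I_\varepsilon(C_\varepsilon(A))$. Axiom $(UM_3)$ is clause (i), read with the larger radius giving the smaller interior. Axiom (D) is clause (viii), $(I_\varepsilon(A^{c}))^{c}=C_\varepsilon(A)$ (or immediate if $\Diamond_\varepsilon$ is taken as an abbreviation for $\neg\square_\varepsilon\neg$). Axiom $(UM_4)$ is clause (ix), $A\subseteq I_\varepsilon(C_\varepsilon(A))$, which uses symmetry of $d$. For the rules: (MP) preserves validity since $[\phi]=X$ and $[\phi]\subseteq[\psi]$ give $[\psi]=X$; and $(UM\text{-Nec})$ preserves it since $[\phi]=X$ forces $[\square_\varepsilon\phi]=I_\varepsilon(X)=X$.

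There is no deep obstacle here, since Proposition 5 has already done the work; the points requiring care are, first, keeping the direction of the radius comparisons straight (a larger $\varepsilon$ yields a smaller $I_\varepsilon(A)$ and a larger $C_\varepsilon(A)$), and second, noting that axiom (TI) presupposes that $\max\{\gamma,\delta\}$ is again a legitimate modal index, i.e. that $G$ is closed under $\max$, which we assume throughout. It is worth flagging explicitly in the proof that all of the genuinely ultrametric-specific content is concentrated in clauses (ii) and (vii) of Proposition 5 --- equivalently, in axioms (TI) and $(UM_2)$ --- and that these are precisely the schemata that would fail over the wider class of arbitrary metric spaces, which is why our completeness target is the class of ultrametric spaces rather than all metric spaces.
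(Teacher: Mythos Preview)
Your proposal is correct and follows essentially the same route as the paper: both reduce each axiom schema to one of the clauses (i)--(ix) of the $I_\varepsilon/C_\varepsilon$ proposition, matching (T) to (iv), $(UM_1)$ to (iv)+(vi), (TI) to (ii), $(UM_2)$ to (vii), $(UM_3)$ to (i), (D) to (viii), and $(UM_4)$ to (ix). Your write-up is in fact slightly more complete than the paper's, since you also explicitly verify that (MP) and $(UM$-Nec$)$ preserve validity, and your handling of (K) via $I_\varepsilon(A^{c}\cup B)\cap I_\varepsilon(A)\subseteq I_\varepsilon(B)$ is cleaner than the paper's case split on $x\in I_\varepsilon([\psi])$ versus $x\in I_\varepsilon([\phi]^{c})$ (which does not actually exhaust membership in $I_\varepsilon([\phi]^{c}\cup[\psi])$).
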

\begin{proof}
\begin{itemize}
    \item[(K)] We must prove that the inclusion $I_\epsilon([\phi]^C \cup [\psi]) \subseteq( I_\epsilon([\phi]))^C \cup I_\epsilon[\psi]$ holds in any ultra-metric space. If $x\in I_\epsilon([\psi])$, then it's clear. Now suppose $x \in I_\epsilon([\phi]^C) = I_\epsilon([\neg \phi])$. Then because, by Proposition 3.$(iv)$ $I_\epsilon([\neg \phi]) \subseteq [\neg \phi]$, it follows that $x \in [\neg \phi]$, and by $(vi)$ it follows that $x \in C_\epsilon([\neg \phi]) = C_\epsilon([\phi]^C) = (I_\epsilon([\phi]))^C$, by $(viii)$. 
    \item[(T)] Follows from Proposition 3. $(iv)$
    \item[$(UM_1)$] Follows from Prop.3 $(iv)$ and $(vi)$;
    \item[(TI)] Follows from Prop 3. $(ii)$;
    \item[$(UM_2)$] Follows from Prop 3. $(vii)$;
    \item[$(UM_3)$] Follows from Prop 3. $(i)$;
    \item[(D)] Follows from Prop 3. $(viii)$ 
    \item[$(UM_4)$] Follows from Prop 3. $(ix)$.
    
\end{itemize}
\end{proof}
\subsection{Completeness}
\begin{proposition}
 The logic of generalized knowledge and partial justification $L_{\Diamond_\epsilon, \square_\epsilon}$ is complete with respect to the class of Ultra-Metric Spaces.
 \end{proposition}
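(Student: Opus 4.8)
The plan is a Henkin-style canonical-model argument, with the distinctive work concentrated in recognising the canonical frame as (the frame induced by) an ultra-metric space. Fix an $L_{\square_\epsilon}$-consistent formula $\phi$; it suffices to build an ultra-metric model with a point satisfying $\phi$. Let $W^c$ be the set of maximal $L_{\square_\epsilon}$-consistent sets, put $w\,R^c_\epsilon\,v$ iff $\{\psi : \square_\epsilon\psi\in w\}\subseteq v$ for each $\epsilon\in G$, and $V^c(p)=\{w : p\in w\}$. Since every $\square_\epsilon$ is a normal modality — axiom $(K)$ and rule $(UM\text{-}Nec)$ — the Lindenbaum lemma and the usual Existence/Truth Lemma go through unchanged: $\mathcal{M}^c,w\Vdash\psi$ iff $\psi\in w$, and $\phi$ belongs to some $w_\phi\in W^c$, so $\mathcal{M}^c,w_\phi\Vdash\phi$ in the relational reading. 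Note that only finitely many degrees $\epsilon_1,\dots,\epsilon_k$ occur in $\phi$; write $G_\phi=\{0,\epsilon_1,\dots,\epsilon_k,1\}$.

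Next I would read the frame conditions off the ``ultra-metric space axioms''. Axiom $(T)$ makes each $R^c_\epsilon$ reflexive; $(UM_4)$ (the Brouwer scheme $\phi\to\square_\epsilon\Diamond_\epsilon\phi$) makes it symmetric; and $(UM_2)$ together with reflexivity — equivalently the instance $\gamma=\delta=\epsilon$ of $(TI)$, which gives $R^c_\epsilon\circ R^c_\epsilon=R^c_\epsilon$ — makes it transitive, so each $R^c_\epsilon$ is an equivalence relation. Axiom $(UM_3)$ gives the nesting $R^c_\gamma\subseteq R^c_\delta$ whenever $\gamma\le\delta$, and $(TI)$ in full gives the composition law $R^c_\gamma\circ R^c_\delta=R^c_{\max(\gamma,\delta)}$, which is precisely the modal trace of the strong triangle inequality. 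Each of these is obtained by the standard move turning a modal validity into a relational identity in the canonical model: for every $w$ and $\psi$, $w$ decides $\square_\epsilon\psi$, so $R^c_\epsilon(w)\subseteq[\psi]$ iff $\square_\epsilon\psi\in w$, and hence relational inclusions are detected by truth-sets. One must additionally secure that $R^c_0$ is the identity — the syntactic counterpart of clause (iii) of Proposition 3, $I_0(A)=A$, i.e.\ $\square_0\psi\leftrightarrow\psi$, which is exactly the separation axiom $d(x,y)=0\Rightarrow x=y$; if this schema is not already derivable from $(T)$, $(D)$, $(UM_2)$ and $(UM_4)$ it must be taken as an axiom.

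Finally I would manufacture the metric. Pass to the submodel of $\mathcal{M}^c$ generated by $w_\phi$: since $1=\max G$ and the $R^c_\epsilon$ are nested, this is exactly the $R^c_1$-class of $w_\phi$, on which $R^c_1$ is the universal relation and on which truth of every formula is preserved. On this submodel define $d(w,v)=\min\{\epsilon\in G_\phi : w\,R^c_\epsilon\,v\}$; the minimum is attained because $G_\phi$ is finite and $1\in G_\phi$ always qualifies. Reflexivity and $R^c_0=\mathrm{id}$ give $d(w,w)=0$ and $d(w,v)=0\Rightarrow w=v$; symmetry of the $R^c_\epsilon$ gives $d(w,v)=d(v,w)$; and if $d(w,u)=\epsilon$, $d(u,v)=\delta$ then $w\,R^c_{\max(\epsilon,\delta)}\,v$ by the composition law (and $\max(\epsilon,\delta)\in G_\phi$), so $d(w,v)\le\max(d(w,u),d(u,v))$ — the ultra-metric inequality. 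Monotonicity of the $R^c_\epsilon$ then yields $d(w,v)\le\epsilon\iff w\,R^c_\epsilon\,v$ for every $\epsilon\in G_\phi$, so the ultra-metric reading of $\square_{\epsilon_i}$ over this submodel equipped with $d$ coincides with the relational one; an easy induction on subformulas of $\phi$ gives $w_\phi\Vdash_d\phi$, which completes the proof.

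The step I expect to be genuinely delicate is the metrization, and specifically the demand that $d$ be a true ultra-metric rather than merely a pseudo-ultra-metric: this forces both the identification $R^c_0=\mathrm{id}$ (hence the need for the $\square_0\psi\leftrightarrow\psi$ schema) and the attainment of the infimum defining $d$. The latter would fail on the full canonical model whenever $G$ accumulates from above at some degree — the reason I would restrict throughout to the finite scale $G_\phi$ of degrees actually occurring in the target formula, on which infima are minima and all the verifications above are immediate. If one instead wants completeness stated with the fixed ambient $G$ and without this reduction, one needs an auxiliary argument that the relevant infima in the canonical model are attained (or that $G$ has no upward accumulation points), which is the one place where the countability of $G$ and the real-valued nature of $d$ genuinely interact.
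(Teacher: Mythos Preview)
Your argument follows the paper's canonical-model strategy: maximal consistent sets, canonical accessibility $xM_\delta y$ (your $R^c_\delta$), and a distance manufactured from these relations, with the Truth Lemma obtained in the standard way. The paper sets $d^L(x,y)=\inf\{\delta : xM_\delta y\}$ over the full index set $G$ and verifies symmetry (via $UM_4$), reflexivity (via $T$), and the strong triangle inequality (via $TI$) directly on the whole canonical set. Where you diverge is in the metrization: you restrict to the finite scale $G_\phi$ of degrees occurring in $\phi$, pass to the $R^c_1$-class of $w_\phi$, and take a minimum rather than an infimum. This is a genuinely more careful treatment: it guarantees attainment, so the equivalence $d(w,v)\le\epsilon\Leftrightarrow wR^c_\epsilon v$ is automatic for the relevant $\epsilon$; the paper's triangle-inequality argument, by contrast, silently assumes $xM_\gamma y$ whenever $d^L(x,y)=\gamma$, i.e.\ that the infimum is a minimum, which is exactly the accumulation-point issue you flag in your final paragraph. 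You are also right to isolate the separation condition $R^c_0=\mathrm{id}$: the paper's Lemma~1 checks only symmetry, reflexivity, and the ultra-triangle inequality and never verifies $d^L(x,y)=0\Rightarrow x=y$, nor does a schema $\phi\to\square_0\phi$ appear in the axiom list --- so your caveat about needing that schema is well-founded and in fact points to a small gap in the paper's own presentation.
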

 We prove this by defining a canonical ultra-metric space model (a well known-strategy in topological semantics for modal logic \cite{MLmetric}).
We build the canonical metric space model $\mathcal{M}^L_{\square_\varepsilon}= (X^L_{\square_\varepsilon, }, d^L)$ as follows:
\begin{itemize}
    \item  $X^L_{\square_\varepsilon}$ is the set of all maximally consistent sets of $\mathcal{M}$  (the original ultra-metric space model);
    \item The relation in the canonical Ultra-metric space model is defined as follows: $xM_\delta y$ if and only if, for every formula $\phi$: $\square_\delta \phi \in x \Rightarrow \phi \in y$ \footnote{Note that this is equivalent to saying that $\phi \in y \Rightarrow \diamond_\delta \phi \in x$, see Lemma 4.19 in \cite{bluebook}};
    \item $d^L_{\square_\varepsilon}$ is the distance between points in $X^L$, defined as follows: $d^L_{\square_\varepsilon}(x, y)= inf\{\delta: xM_\delta y\}$
    
\end{itemize}

\begin{lemma} $\mathcal{M}^L_{\square_\varepsilon}= (X^L_{\square_\varepsilon}, d^L, \nu^L)$ is an Ultra-Metric Space:
\end{lemma}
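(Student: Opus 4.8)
The plan is to verify each of the ultra-metric axioms (positivity/equality, symmetry, and the strong triangle inequality) for the function $d^L_{\square_\varepsilon}(x,y) = \inf\{\delta : x M_\delta y\}$ on the set $X^L_{\square_\varepsilon}$ of maximally consistent sets. Before doing so I would record two preliminary observations that make the relations $M_\delta$ well-behaved. First, $M_\delta$ is reflexive for every $\delta$: by axiom (T), $\square_\delta\phi\in x$ implies $\phi\in x$, so $xM_\delta x$; hence $d^L(x,x)=0$, and the infimum is always taken over a nonempty set, so $d^L$ is well defined and nonnegative. Second, I would prove a monotonicity fact: if $\gamma\geq\delta$ then $xM_\gamma y$ implies $xM_\delta y$. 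This uses axiom $(UM_3)$: if $\square_\delta\phi\in x$ then $\square_\gamma\phi\in x$ (wait — the direction is $\square_\gamma\phi\to\square_\delta\phi$ for $\gamma\geq\delta$), so more carefully: $xM_\gamma y$ means $\square_\gamma\phi\in x\Rightarrow\phi\in y$; given $\square_\delta\phi\in x$ with $\delta\le\gamma$ we want $\phi\in y$, and indeed from $\square_\delta\phi\in x$ we cannot directly get $\square_\gamma\phi\in x$. The correct route is the contrapositive reading of $M_\delta$ via $\Diamond$: $xM_\delta y$ iff ($\phi\in y\Rightarrow\Diamond_\delta\phi\in x$), and $(UM_3)$ in the form $\square_\gamma\phi\to\square_\delta\phi$ dualizes to $\Diamond_\delta\phi\to\Diamond_\gamma\phi$, giving that $xM_\delta y$ implies $xM_\gamma y$ for $\gamma\ge\delta$. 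So the set $\{\delta : xM_\delta y\}$ is upward closed, which is exactly what one expects of a distance.

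Next I would establish the equality clause: $d^L(x,y)=0$ iff $x=y$. The direction $x=y\Rightarrow d^L(x,y)=0$ is reflexivity of $M_0$. For the converse, suppose $d^L(x,y)=0$; then for every $\delta>0$ in the index set $G$ we have $xM_\delta y$, and in particular $xM_0 y$ holds provided $0$ is attained — here I would use that $G$ contains $0$ and that axiom (TI)/$(UM_3)$-style reasoning together with the infimum being $0$ forces $xM_0 y$ (if $0\in\{\delta:xM_\delta y\}$ directly, or as a limit using the upward-closedness and density considerations; the cleanest argument is that $\square_0\phi\leftrightarrow\phi$ is derivable, so $M_0$ is just set-identity restricted to maximal consistency, whence $xM_0y$ gives $x\subseteq y$ and by maximality $x=y$). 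I would spell out that $\square_0\phi\to\phi$ is (T) at $\epsilon=0$ and $\phi\to\square_0\phi$ follows from $(UM_3)$ with $\gamma=0$... actually $\phi\to\square_0\phi$ needs care; it should come from the fact that $d(x,y)=0\Rightarrow x=y$ in the target semantics being mirrored by a theorem, and I would derive it from $(UM_4)$ or directly argue that $xM_0y$ already forces $x=y$ because for any $\phi\in x$, $\square_0\phi$ is provably equivalent to $\phi$. Symmetry, $d^L(x,y)=d^L(y,x)$, reduces to showing $M_\delta$ is symmetric, which is the standard argument: $xM_\delta y$ iff ($\phi\in y\Rightarrow\Diamond_\delta\phi\in x$); using axiom $(UM_2)$, $\Diamond_\epsilon\phi\to\square_\epsilon\Diamond_\epsilon\phi$, one shows that if $\Diamond_\delta\phi\notin x$ then one can find a witnessing formula pushing membership the other way — this is precisely the familiar proof that the symmetry axiom makes the canonical accessibility relation symmetric, and since $d^L$ is built symmetrically from $M_\delta$ it follows that $\{\delta:xM_\delta y\}=\{\delta:yM_\delta x\}$, hence equal infima.

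The main obstacle, and the heart of the lemma, is the strong triangle inequality $d^L(x,z)\leq\max\{d^L(x,y),d^L(y,z)\}$. I would reduce this to a composition property of the relations: \emph{if $xM_\gamma y$ and $yM_\delta z$ then $xM_{\max\{\gamma,\delta\}}z$}. Granting this, for any $\gamma'>d^L(x,y)$ and $\delta'>d^L(y,z)$ we get $xM_{\gamma'}y$ and $yM_{\delta'}z$ (by upward-closedness), hence $xM_{\max\{\gamma',\delta'\}}z$, so $d^L(x,z)\leq\max\{\gamma',\delta'\}$; taking infima over $\gamma',\delta'$ yields the ultra-metric inequality. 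To prove the composition property: assume $\square_{\max\{\gamma,\delta\}}\phi\in x$ and aim for $\phi\in z$. By axiom (TI), $\square_{\max\{\gamma,\delta\}}\phi\leftrightarrow\square_\gamma\square_\delta\phi$, so $\square_\gamma\square_\delta\phi\in x$; since $xM_\gamma y$ this gives $\square_\delta\phi\in y$; since $yM_\delta z$ this gives $\phi\in z$. That is the whole argument, but the subtle point I would be careful about is the interaction with the infimum when $\gamma$ or $\delta$ equals the infimum but is not attained — this is handled cleanly by the upward-closedness of the index sets established earlier, so that the inequality survives the passage to infima without needing the infimum to be a minimum. I would close by noting that all the supporting set-membership manipulations are the standard canonical-model facts (existence of maximally consistent extensions, the Lindenbaum lemma, and the dual characterization of $M_\delta$ via $\Diamond_\delta$ as in the cited $\cite{bluebook}$), so the only genuinely new ingredient is the use of (TI) to get the $\max$ on the right-hand side, which is exactly where the ultra-metric (as opposed to ordinary metric) structure enters.
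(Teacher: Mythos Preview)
Your core arguments match the paper's exactly: reflexivity via $(T)$, the ultra-metric triangle inequality via $(TI)$ through the composition step $xM_\gamma y \wedge yM_\delta z \Rightarrow xM_{\max\{\gamma,\delta\}}z$, and symmetry via the $B$-type axiom. On the last point the paper invokes what it labels $(S_4)$, i.e.\ $(UM_4)$: $\phi\to\square_\alpha\Diamond_\alpha\phi$, whereas you cite $(UM_2)$; these coincide once $(UM_2)$ is chained with the dual of $(T)$, so either route works.

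You are in fact more careful than the paper in two places. First, you establish upward-closure of $\{\delta : xM_\delta y\}$ and use it to pass correctly from the relations $M_\gamma$ to the infimum; the paper silently assumes the infimum is attained when it writes ``$d^L(x,y)=\gamma$, hence $xM_\gamma y$'' in the triangle-inequality step. Second, you attempt the identity clause $d^L(x,y)=0\Rightarrow x=y$, which the paper's proof omits entirely. Your instinct that this needs $\phi\leftrightarrow\square_0\phi$ is right, and your hesitation is warranted: $\phi\to\square_0\phi$ is not obviously derivable from the listed axioms, so on the given axiomatisation the canonical structure is, strictly, only shown to be a pseudo-ultra-metric. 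The paper simply does not address this point, so your proposal already covers everything the paper proves and flags the one genuine lacuna.
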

\begin{proof}
    \begin{itemize}
        \item[1.] Symmetry: $d^L_{\square_\epsilon}(x, y) = d^L_{\square_\epsilon}(y, x)$: We need to show that $inf\{\delta: xM_\delta y\} = inf\{\delta: yM_\delta x\}$. Take arbitrary $\alpha \in [0, 1]$ such that $xM_\alpha y$. Then take arbitrary $\phi \in w$. By the $(S_4)$ axiom, it follows that $\square_\alpha\Diamond_\alpha \phi \in x$. From the definition of $xM_\alpha y$ it follows that $\square_\alpha(\Diamond_\alpha\phi) \in x \Rightarrow \Diamond_\alpha \phi \in y$, and by MP: $\Diamond_\alpha \phi \in y$. So for arbitrary $\alpha \in [0,1]$ and $\phi$ we have that $\phi \in x$ entails $\Diamond_\alpha\phi \in y$ and hence that $yM_\alpha x$.

        Following the same reasoning, from $yM_\alpha x$ we can conclude that $xM_\alpha y$. So it follows that $\{\delta: xM_\delta y\} = \{\delta: yM_\delta x\}$ and hence $inf\{\delta: xM_\delta y\} = inf\{\delta: yM_\delta x\}$. 
        \item[2.] Reflexivity: $d^L_{\square_\epsilon}(x, x) = 0$.  By axiom $(T)$ it follows that, for all $\gamma \in [0,1]$, and for all $x\in X^L_{\square_\epsilon}$ $\square_\gamma \phi \rightarrow \phi \in x$, and hence by definition, for all $\gamma \in [0,1]$ we have that $xM_\gamma x$, and therefore $inf\{\delta : xM_\delta x\} = 0$.
        \item[3.] Ultra-metric triangle inequality: Take three worlds $x, y, z \in X^L_{\square_\epsilon}$ such that $d^L_{\square_\epsilon}(x, y) = \gamma$, $d^L_{\square_\epsilon}(y, z) = \delta$. We want to prove that $d^L_{\square_\epsilon}(x, z) \leq max \{\gamma, \delta\}$. So suppose $\square_{max\{\gamma, \delta\}} \phi \in x$, then by axiom (TI) it follows that $\square_\gamma \square_\delta \phi \in x$. By the definition of $d^L_{\square_\epsilon}$ it follows that $xM_\gamma y$ and hence that $\square_\delta \phi \in y$, and because $yM_\delta z$ it follows that $\phi \in z$. But then $xM_{max\{\gamma, \delta\}} z$, and hence $d(x, z) \leq max\{\gamma, \delta\}$. 
    \end{itemize}
\end{proof}
\begin{lemma} Truth Lemma: For all formulas $\phi$ in $L_{\square_\epsilon}$: 
$$\mathcal{M}^L_{\square_\epsilon}, x \Vdash \phi \Leftrightarrow \phi \in x$$
\begin{proof}
    By induction on the complexity of $\phi$. We focus on the non-trivial case for $\Diamond_\epsilon\phi$. 
    \begin{itemize}
        \item[1.] Forward direction ($\Rightarrow$) Assume $\mathcal{M}^L_{\square_\epsilon} \Vdash \Diamond_\epsilon \phi$. Then there exists $y \in X^L_{\square_\epsilon}$ such that $d^L_{\square_\epsilon}(x, y) \leq \epsilon$ and $\mathcal{M}^L_{\square_\epsilon}, y \Vdash \phi$. But then, by induction hypothesis $\phi \in y$. And because then there exists $x$ such that $y M_\epsilon x$, it follows by definition of $M_\epsilon$ that $\diamond_\epsilon\phi \in x$.
        \item[2.] Reverse direction ($\Leftarrow$) Assume $\Diamond_\epsilon \phi \in x$. We need to construct $y \in \mathcal{M}^L_{\square_\epsilon}$ such that $xM_\epsilon y$ and $\phi \in y$. I.e, we need to construct a consistent set: 
        $$\Gamma = \{\Diamond_\epsilon \psi | \psi \in x\} \cup \{\phi\}$$
        and extend it to a maximally consistent set $y$ such that $\phi \in y$ and $d^L_{\square_\epsilon}(x, y) \leq \epsilon$. 

        First, we prove that $\Gamma$ is consistent. For suppose it is not, then: 
        $$\vdash (\Diamond_\epsilon \psi_1 \land ...\land \Diamond_\epsilon \psi_n) \rightarrow \neg \phi$$
        By necessitation and the normality axiom, we have:
          $$\vdash (\square_\epsilon\Diamond_\epsilon \psi_1 \land ...\land \square_\epsilon\Diamond_\epsilon \psi_n) \rightarrow \square_\epsilon\neg \phi$$
    \end{itemize}
    But then $(\square_\epsilon\Diamond_\epsilon \psi_1 \land ...\land \square_\epsilon\Diamond_\epsilon \psi_n) \rightarrow \square_\epsilon\neg \phi \in x$, with $\psi_1, ..., \psi_n \in x$. By axiom $(S_4)$ it follows that $\square_\epsilon\Diamond_\epsilon \psi_1, ..., \square_\epsilon \Diamond_\epsilon\psi_n \in x$, and by MP $\square_\epsilon \neg \phi \in x$. But this is a contradiction, so $\Gamma$ is consistent. 
    We can then extend $\Gamma$ to a maximally consistent set $y$ using Lindenbaum's Lemma (\cite{bluebook}). By construction $\phi \in y$ so by I.H, $\mathcal{M}^L_{\square_\epsilon}, y \Vdash \phi$. Also by construction $\psi \in x \Rightarrow \Diamond_\epsilon \psi \in y$, so $yM_\epsilon x$. Because the relation $M_\epsilon$ was proved to be symmetric it follows that: $d^L_{\square_\epsilon}(x, y) = inf \{\delta | x M_\delta y\} \leq \epsilon$. Therefore $\mathcal{M}^L_{\square_\epsilon}, x \Vdash \Diamond_\epsilon \psi$. 
\end{proof}
    
\end{lemma}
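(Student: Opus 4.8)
The plan is to prove the biconditional by induction on the complexity of $\phi$, showing that the truth-set semantics of the canonical model $\mathcal{M}^L_{\square_\epsilon}$ coincides with set-membership at each maximally consistent set $x$. I would first fix the canonical valuation by $\nu^L(p)=\{x\in X^L_{\square_\epsilon}: p\in x\}$, which makes the atomic case $\phi=p$ hold by definition. The Boolean cases are then the routine ones for maximally consistent sets, $\neg\psi\in x \Leftrightarrow \psi\notin x$ and $\psi_1\wedge\psi_2\in x \Leftrightarrow \psi_1\in x \text{ and } \psi_2\in x$, each transferring the induction hypothesis through the clauses of Definition 2.4 without any appeal to the metric. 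All the genuine content sits in the modal step, which I would phrase for the dual operator $\Diamond_\epsilon$ (defined from the primitive $\square_\epsilon$ via axiom $(D)$), since the $\square_\epsilon$ case then follows by contraposition.

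Before the modal step I would isolate an accessibility lemma linking the infimum-defined distance to the relations $M_\delta$: that $d^L_{\square_\epsilon}(x,y)\le \epsilon$ should be interchangeable with $x M_\epsilon y$ for the degrees that occur. One half is immediate, since $x M_\epsilon y$ puts $\epsilon$ into $\{\delta: x M_\delta y\}$ and hence forces the infimum to be at most $\epsilon$. For the converse I would first record monotonicity of the relations from axiom $(UM_3)$: if $x M_\delta y$ and $\delta'\ge \delta$ then $x M_{\delta'} y$, so that $\{\delta: x M_\delta y\}$ is an upward-closed subset of the degree set and its infimum is approached from within.

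With this lemma available the modal case splits into the two directions the statement records. For the backward (existence) direction, assuming $\Diamond_\epsilon\phi\in x$ I would form $\Gamma=\{\Diamond_\epsilon\psi:\psi\in x\}\cup\{\phi\}$ and prove it consistent: were it inconsistent we would have $\vdash(\Diamond_\epsilon\psi_1\wedge\cdots\wedge\Diamond_\epsilon\psi_n)\rightarrow\neg\phi$ with each $\psi_i\in x$, and then necessitation together with $(K)$ yields $\vdash(\square_\epsilon\Diamond_\epsilon\psi_1\wedge\cdots\wedge\square_\epsilon\Diamond_\epsilon\psi_n)\rightarrow\square_\epsilon\neg\phi$; since the $B$-type axiom $(UM_4)$ pulls each $\square_\epsilon\Diamond_\epsilon\psi_i$ into $x$ from $\psi_i\in x$, modus ponens gives $\square_\epsilon\neg\phi\in x$, i.e.\ $\neg\Diamond_\epsilon\phi\in x$, contradicting the hypothesis. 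Extending $\Gamma$ to a maximally consistent $y$ by Lindenbaum, we get $\phi\in y$ and, by construction, $y M_\epsilon x$; the symmetry of $M_\epsilon$ established in the preceding lemma converts this to $x M_\epsilon y$, hence $d^L_{\square_\epsilon}(x,y)\le\epsilon$, and the induction hypothesis makes $y$ the required witness for $\mathcal{M}^L_{\square_\epsilon},x\Vdash\Diamond_\epsilon\phi$. For the forward direction a witness $y$ with $d^L_{\square_\epsilon}(x,y)\le\epsilon$ and (by induction hypothesis) $\phi\in y$ yields $x M_\epsilon y$ through the accessibility lemma, and the membership characterisation of $M_\epsilon$, equivalently $\chi\in y\Rightarrow\Diamond_\epsilon\chi\in x$, delivers $\Diamond_\epsilon\phi\in x$.

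The main obstacle I anticipate is exactly the accessibility lemma at its boundary. Because $d^L_{\square_\epsilon}$ is an infimum, $d^L_{\square_\epsilon}(x,y)\le\epsilon$ only guarantees $x M_\eta y$ for every $\eta>\epsilon$, and the monotonicity supplied by $(UM_3)$ runs the wrong way to turn these into $x M_\epsilon y$ when $\epsilon$ is precisely the infimum; note that the backward direction above sidesteps this (it produces $x M_\epsilon y$ outright), so the difficulty is localised to the forward direction. I would address it by showing that the infimum is in fact attained: using $(TI)$ with $\gamma=\delta$ together with $(T)$ and $(UM_4)$, each $M_\epsilon$ is an equivalence relation and the family is nested, which I would try to leverage into closedness of $\{\delta: x M_\delta y\}$; and, should a fully general attainment argument prove elusive, I would fall back on the discreteness of the degree set $G$ (for instance the values $1/2^n$), for which the infimum of an upward-closed subset is automatically achieved. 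This attainment step is the one genuinely delicate point, the consistency argument and the atomic and Boolean cases being routine by comparison.
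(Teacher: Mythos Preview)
Your plan is essentially the paper's own approach: induction on complexity with the work concentrated in the $\Diamond_\epsilon$ case, the same set $\Gamma=\{\Diamond_\epsilon\psi:\psi\in x\}\cup\{\phi\}$, the same consistency argument via necessitation and $(K)$, the same use of $\psi\to\square_\epsilon\Diamond_\epsilon\psi$ to push the $\square_\epsilon\Diamond_\epsilon\psi_i$ into $x$, and the same appeal to symmetry of $M_\epsilon$ to conclude $d^L_{\square_\epsilon}(x,y)\le\epsilon$.

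Where you actually go beyond the paper is in the forward direction. The paper passes silently from $d^L_{\square_\epsilon}(x,y)\le\epsilon$ to $yM_\epsilon x$, without addressing why the infimum defining $d^L$ should be attained; you correctly flag this as the one delicate point and propose two remedies (closedness of $\{\delta:xM_\delta y\}$ via the equivalence-relation structure from $(TI)$, $(T)$, $(UM_4)$, or discreteness of $G$). That extra care is warranted and does not appear in the paper's argument, so your proposal is at least as complete as the original. Incidentally, you label the relevant axiom correctly as the $B$-type $(UM_4)$; the paper's proof refers to it as ``$(S_4)$'', which is just a naming slip.
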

\section{Definability results}
In this section we present some definability results for the logic $L_{\square_\epsilon}$. Most importantly, we prove that the Cantor Space is not definable in the language $\mathcal{L}_{\square_\epsilon}$. This is relevant because - as we previously elaborated - the Cantor Space is the most natural model for the semantics of stability of truth.

We also prove that the class of perfect ultra-metric spaces is not definable in $\mathcal{L}_{\square_\varepsilon}$. 
As we have clarified earlier, this property of the Cantor space is essential to guarantee the \emph{non-degeneracy of the graded modalities}: for every world $w$ and every $\varepsilon>0$ there are other worlds within distance $\varepsilon$ of $w$, so $\square_\varepsilon$ and $\Diamond_\varepsilon$ quantify over genuinely nontrivial neighbourhoods rather than collapsing to propositional truth. 

Consequently, although $\mathcal{L}_{\square_\varepsilon}$ cannot syntactically single out perfection, perfection remains a semantically important extra assumption for the intended Cantor semantics.

We now proceed to present some results concerning the preservation of validity of formulas in the language $\mathcal{L}_\epsilon$ with respect to some constructions on ultra-metric spaces. 
\begin{mydef}
    Modal satisfaction with respect to an ultra-metric space model $(\mathcal{U}, d)$: We say that a formula $\phi \in \mathcal{L}_{\square_\epsilon}$ is satisfied in an ultra-metric space model $((\mathcal{U}, d), V)$ iff there is a valuation $V$ and a world $w \in \mathcal{U}$ such that: $(\mathcal{U}, d), V, w \Vdash \phi$. 
\end{mydef}
\begin{mydef}
    Modal validity with respect to an ultra-metric space $(\mathcal{U}, d)$: We say that a formula $\phi \in \mathcal{L}_{\square_\epsilon}$ is valid in an ultra-metric space $(\mathcal{U}, d)$ -- written as $(\mathcal{U}, d) \Vdash \phi$ -- iff for any valuation $V$ and for any world $w \in \mathcal{U}$: $(\mathcal{U}, d), V, w \Vdash \phi$. 
\end{mydef}
\begin{mydef}
    Ultra-metric bounded morphism (models): An ultra-metric bounded morphism between ultra-metric space models $\mathcal{M} = (W, d, V)$ and $\mathcal{M'} = (W', d', V')$ is a function $f: W \rightarrow W'$ and $k> 0$ such that: 
\end{mydef}
\begin{itemize}
    \item[1.] For all atomic propositions $p$: $w \in V(p) \Leftrightarrow f(w) \in V'(p)$ for all propositions $p$;
    \item[2.] Forward condition: for all $\epsilon \in [0, 1]$: $d(w, v) \leq \epsilon$ implies $d(f(w), f(v)) \leq  k.\epsilon$; 
    \item[3.] Back condition: If $d'(f(w), v') \leq \epsilon$ then there exists $v$ such that,  $d(w, v) \leq (k^{-1}).\epsilon$ and $f(v) = v'$.  
\end{itemize}
\begin{mydef}
    Ultra-metric bounded morphism (frames): An ultra-metric bounded morphism from an ultra-metric space $(\mathcal{U}, d)$ to another $(\mathcal{U}, d)$ is a function $f: W \rightarrow W'$ such that, for some $k > 0$: 
\end{mydef}
\begin{itemize}
     \item[1.] Forward condition: for all $\epsilon \in [0, 1]$: $d(w, v) \leq \epsilon$ implies $d(f(w), f(v)) \leq k.\epsilon$; 
    \item[2.] Back condition: If $d'(f(w), v') \leq \epsilon$ then there exists $v$ such that $d(w, v) \leq (k^{-1}).\epsilon$ and $f(v) = v'$.  
\end{itemize}
\begin{mydef}
    The disjoint union of a family of ultra-metric space models$\{\mathcal{M}_i = (W_i, d_i, V_i)\}_{i \in I}$ is $\biguplus \mathcal{M_i} = (W, d, V)$, where:
\end{mydef}
\begin{itemize}
    \item $W = \biguplus_{i \in I} W_i$;
    \item 
    \[
    d(x, y) = 
    \begin{cases}
        d_i(x, y) & \text{if } x, y \in W_i, \\
        2 & \text{otherwise.}
    \end{cases}
    \]
    \item $V(p) = \biguplus_{i \in I} V_i(p)$
\end{itemize}

It is straightforward to verify that $(W, d)$ resulting from this construction is indeed an ultra-metric space. 
\begin{mydef}
$\epsilon$-generated subspace: Let $(\mathcal{U}, d)$ be an ultra-metric space. Then we say that $(B_\epsilon(x), d')$ is the $\epsilon$-generated subspace of $(\mathcal{U}, d)$ (by a point $x\in \mathcal{U}$ ) if:
\begin{itemize}
    \item $B_\epsilon(x) \subseteq \mathcal{U}$, as $B_\epsilon(x) = \{y \in \mathcal{U}| d(x, y) \leq \epsilon \land x \in \mathcal{U}\}$
    \item $d'(x, y) = d(x, y)$ for all $x, y \in \mathcal{U}$. 
\end{itemize}
\end{mydef}

\begin{proposition}
    For $\{\mathcal{M}_i =(\mathcal{U}_i, d_i)| i \in I\}$ a family of ultra-metric space  models, $\biguplus_{i \in I}\mathcal{M}_i$ their disjoint union, and for all $\phi \in \mathcal{L}_{\square_\epsilon}$:
    $$\mathcal{M}_i, w \Vdash \phi \Leftrightarrow \biguplus_{i \in I} \mathcal{M}_i, w \Vdash \phi$$
    I.e,  modal satisfaction is invariant under disjoint unions of ultra-metric space models.
\end{proposition}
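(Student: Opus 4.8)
The plan is to argue by induction on the complexity of $\phi$, following the standard template for invariance of modal satisfaction under disjoint unions (as for ordinary Kripke models), viewing an ultra-metric space model as a multi-relational structure via the relations $wR_\epsilon v \iff d(w,v)\le\epsilon$ already introduced in the footnote to Definition 2.4.

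The one geometric fact doing all the work is the following: if $w\in W_i$, then for every $\epsilon\in G\subseteq[0,1]$ the closed $\epsilon$-ball around $w$ taken in $\biguplus_{i\in I}\mathcal{M}_i$ coincides with the closed $\epsilon$-ball around $w$ taken in $\mathcal{M}_i$; that is, $\{v\in W : d(w,v)\le\epsilon\}=\{v\in W_i : d_i(w,v)\le\epsilon\}$. This holds because, by the definition of the disjoint-union metric, any point $v$ in a component $W_j$ with $j\ne i$ satisfies $d(w,v)=2>1\ge\epsilon$, so no such point lies within distance $\epsilon$ of $w$; and for $v\in W_i$ we have $d(w,v)=d_i(w,v)$ by definition. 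In particular the $\epsilon$-neighbourhood of $w$ never crosses component boundaries, which is exactly what lets the induction hypothesis be applied inside a single $\mathcal{M}_i$.

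Granting this, the induction is routine bookkeeping. \textbf{Atomic case:} since $V(p)=\biguplus_{i\in I}V_i(p)$, membership $w\in V(p)$ is equivalent to $w\in V_i(p)$, so the two models agree on $p$ at $w$. \textbf{Boolean cases:} for $\neg\psi$ and $\psi\wedge\chi$ the truth clauses are evaluated at the fixed point $w$ and the equivalence is immediate from the induction hypothesis (and the dual $\Diamond_\epsilon$ needs no separate treatment, being the Boolean dual of $\square_\epsilon$). \textbf{Modal case $\square_\epsilon\psi$:} using the ball identity above, $\biguplus_i\mathcal{M}_i,w\Vdash\square_\epsilon\psi$ iff $\biguplus_i\mathcal{M}_i,v\Vdash\psi$ for all $v\in W_i$ with $d_i(w,v)\le\epsilon$; by the induction hypothesis (applicable precisely because all such $v$ live in $W_i$) this is equivalent to $\mathcal{M}_i,v\Vdash\psi$ for all such $v$, i.e.\ to $\mathcal{M}_i,w\Vdash\square_\epsilon\psi$.

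The proof has no genuine obstacle; the only point that requires care is to notice that every modal index $\epsilon$ lies in $[0,1]$ and is therefore strictly smaller than the sentinel value $2$ assigned to cross-component pairs — this is exactly why $\epsilon$-balls stay inside a single component, and hence why the construction was set up with cross-component distance $2$ rather than, say, $1$. (If $G$ were allowed to contain values $\ge 2$, or a smaller cross-component distance were used, the statement could fail.) One should also assume, as is implicit in the statement, that the index family is nonempty and that $w$ genuinely belongs to some summand $W_i$, so that both sides of the biconditional are well posed.
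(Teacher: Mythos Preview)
Your proof is correct and follows essentially the same approach as the paper: an induction on formula complexity in which the modal step relies on the fact that $\epsilon$-balls (for $\epsilon\in G\subseteq[0,1]$) never cross component boundaries because the cross-component distance is $2$. The only cosmetic difference is that you carry out the modal case for $\square_\epsilon$ while the paper does it for $\Diamond_\epsilon$; your version is in fact slightly more explicit, isolating the ball identity $\{v\in W:d(w,v)\le\epsilon\}=\{v\in W_i:d_i(w,v)\le\epsilon\}$ and spelling out the role of the sentinel value $2$, which the paper uses only implicitly.
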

\begin{proof}
    By induction. We leave out the boolean cases and consider only that of the quantifier  $\Diamond_\epsilon$:
\begin{itemize}
    \item If $\mathcal{M}_i, w \Vdash \Diamond_\epsilon \phi$ , then there is $v$ with $d_i(w, v) \leq \epsilon$ and $\mathcal{M}_i, v \Vdash \phi$. By induction hypothesis $\biguplus_{i \in I}\mathcal{M}_i, v \Vdash \phi$, and because $d(w, v) = d_i(w, v)$ it follows that $\biguplus_{i \in I}\mathcal{M}_i, v \Vdash \Diamond_\epsilon\phi$. 
    \item If $\biguplus_{i \in I}\mathcal{M}_i, w \Vdash \Diamond_\epsilon\phi$ for some $w \in \mathcal{M}_i$, then there exists $v \in \biguplus_{i \in I}\mathcal{M}_i$ such that $d(w, v) \leq \epsilon$ and $\biguplus_{i \in I}\mathcal{M}_i, v \Vdash \phi$. By definition $d(w, v) = d_j(w, v)$ for some $j \in J$ and because the union is disjoint $j = i$. But then by I.H: $\mathcal{M}_i, v \Vdash \phi$ and because $d_i(w, v) \leq \epsilon$, $\mathcal{M}_i, v \Vdash \Diamond_\epsilon\phi$. 
\end{itemize}
\end{proof}
\begin{proposition}
Consider two ultra-metric space models $\mathcal{M}=((\mathcal{U}, d), V)$ and $\mathcal{M}'=((\mathcal{U'}, d'), V)$. Then if $f$ is an ultra-metric bounded morphism between $\mathcal{M}$ and $\mathcal{M'}$:
\begin{itemize}
    \item for all $\phi \in \mathcal{L}_{\square_\epsilon}$, such that $\phi$ is not a quantified formula:
$$\mathcal{M}, w \Vdash \phi \Leftrightarrow \mathcal{M'}, f(w) \Vdash \phi$$
\item  for all $\phi \in \mathcal{L}_{\square_\epsilon}$, such that $\phi$ is a quantified formula and $k >0$ :
$$\mathcal{M}, w \Vdash \Diamond_\epsilon \phi \Leftrightarrow \mathcal{M'}, f(w) \Vdash \Diamond_{k.\epsilon} \phi$$
\end{itemize}

\end{proposition}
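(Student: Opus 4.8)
The plan is to prove, by induction on the complexity of $\phi$, a slightly more informative statement from which the proposition follows. For a formula $\phi$ and the constant $k>0$ attached to the bounded morphism $f$, write $\phi^{k}$ for the formula obtained by replacing every modal index $\delta$ occurring in $\phi$ by $k\cdot\delta$ (so $(\Diamond_{\delta}\psi)^{k}=\Diamond_{k\delta}\,\psi^{k}$, and $\phi^{k}=\phi$ when $\phi$ is modality-free). I will show that for every $\phi\in\mathcal{L}_{\square_{\epsilon}}$ and every $w\in\mathcal{U}$,
\[
\mathcal{M},w\Vdash\phi \iff \mathcal{M}',f(w)\Vdash\phi^{k}.
\]
The two bullets of the proposition are exactly the instances of this for modality-free $\phi$ and for $\phi$ of the form $\Diamond_{\epsilon}\psi$. (One should assume here that the rescaled indices $k\delta$ still belong to the index set $G$; this is automatic when $k\le 1$, or can be arranged by choice of $k$, and it causes no trouble for the disjoint-union and bounded-morphism constructions considered in this section.)

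For the base case $\phi=p$ atomic we have $\phi^{k}=p$, and the required equivalence is precisely condition~1 in the definition of an ultra-metric bounded morphism. For $\phi=\neg\psi$ and $\phi=\psi_{1}\wedge\psi_{2}$ the equivalence follows directly from the induction hypothesis and the satisfaction clauses for $\neg$ and $\wedge$, using that $(\neg\psi)^{k}=\neg(\psi^{k})$ and $(\psi_{1}\wedge\psi_{2})^{k}=\psi_{1}^{k}\wedge\psi_{2}^{k}$. The $\square_{\epsilon}$ case needs no separate treatment, since $\square_{\epsilon}\psi$ is definable as $\neg\Diamond_{\epsilon}\neg\psi$ by axiom (D), so it reduces to the $\Diamond$ and $\neg$ cases.

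For the modal case let $\phi=\Diamond_{\epsilon}\psi$, so $\phi^{k}=\Diamond_{k\epsilon}\,\psi^{k}$. Left-to-right: if $\mathcal{M},w\Vdash\Diamond_{\epsilon}\psi$, pick $v$ with $d(w,v)\le\epsilon$ and $\mathcal{M},v\Vdash\psi$; the forward condition gives $d'(f(w),f(v))\le k\epsilon$, and the induction hypothesis gives $\mathcal{M}',f(v)\Vdash\psi^{k}$, so $\mathcal{M}',f(w)\Vdash\Diamond_{k\epsilon}\psi^{k}$. Right-to-left: if $\mathcal{M}',f(w)\Vdash\Diamond_{k\epsilon}\psi^{k}$, pick $v'$ with $d'(f(w),v')\le k\epsilon$ and $\mathcal{M}',v'\Vdash\psi^{k}$; applying the back condition at radius $k\epsilon$ produces $v\in\mathcal{U}$ with $f(v)=v'$ and $d(w,v)\le k^{-1}(k\epsilon)=\epsilon$, and then the induction hypothesis (with $f(v)=v'$) gives $\mathcal{M},v\Vdash\psi$, whence $\mathcal{M},w\Vdash\Diamond_{\epsilon}\psi$.

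I do not expect a genuine obstacle: this is the metric analogue of the classical bounded-morphism (p-morphism) invariance argument, and it uses none of the ultra-metric-specific facts proved earlier (that open balls are clopen, that every point of a ball is a centre, and so on) — only the atomic clause and the forward/back conditions. The one point that genuinely needs care, and is really the content of the statement, is the bookkeeping of radii: every modal layer rescales the radius by $k$, so the naive ``same formula on both sides'' equivalence fails unless $k=1$, and one must either track the transformed formula $\phi^{k}$ as above or restrict, as the proposition does, to the single-modality reading. Verifying that $k^{-1}\cdot k\epsilon=\epsilon$ exactly (so that no slack accumulates in the back direction) and that the rescaled indices stay in $G$ are the only things to check beyond the routine induction.
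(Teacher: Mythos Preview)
Your argument is correct and follows the same inductive skeleton as the paper's proof: base case via the atomic clause, Boolean cases trivial, and the $\Diamond_\epsilon$ case handled by the forward condition (left-to-right) and the back condition with the radius arithmetic $k^{-1}\cdot k\epsilon=\epsilon$ (right-to-left).

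The one substantive difference is your introduction of the rescaled formula $\phi^{k}$. The paper's statement and proof only explicitly rescale the \emph{outermost} modal index and then, for nested modalities, say informally ``repeat this $n$-many times for quantifier depth $n$''. Your $\phi^{k}$ device makes this bookkeeping precise: every modal index gets multiplied by $k$, and the induction hypothesis is stated once and for all as $\mathcal{M},w\Vdash\phi\iff\mathcal{M}',f(w)\Vdash\phi^{k}$, so the nested case requires no separate unwinding. This is a genuine improvement in rigour over the paper's version, which as literally stated (rescaling only the outer $\Diamond_\epsilon$) is not quite the right claim when $\phi$ itself contains modalities. Your caveat about $k\delta$ needing to remain in the index set $G$ is also a point the paper passes over in silence; it is harmless for the applications in this section but worth flagging.
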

\begin{proof}
    We skip the propositional variables and the boolean cases and consider the case of the quantifier $\Diamond_\epsilon \phi$. 
    \begin{itemize}
        \item Suppose $\mathcal{M}, w \Vdash \Diamond_\epsilon \phi$. Then there exists $v \in \mathcal{U}$ such that $d(w, v) \leq \epsilon$ and $\mathcal{M}, v \Vdash \phi$. By I.H  $\mathcal{M}', f(v) \Vdash \phi$. By the forwards condition then, for $k > 0$: $d(f(w), f(v)) \leq k.\epsilon$, and hence $\mathcal{M}', f(w) \Vdash \Diamond_{k.\epsilon}\phi$. 
\item  Suppose $\mathcal{M}', f(w) \Vdash \Diamond_{k.\epsilon} \phi$. Then $\exists v' \in \mathcal{U}$ such that $d'(f(w), v') \leq k.\epsilon$ and $\mathcal{M}', v' \Vdash \phi$. By the backwards condition there is a $v \in \mathcal{U}$ such that there is $k'>0$ (namely $k' = k^{-1}$), $d(w, v) \leq k^{-1}.k.\epsilon = \epsilon$ and  $f(v) = v'$. If $\phi$ is a quantified formula we just repeat this n-many times for quantifier depth $n$. If it isn't, then by I.H: $\mathcal{M}, v \Vdash \phi$. Therefore $\mathcal{M}, w \Vdash \Diamond_{\epsilon} \phi $
   \end{itemize}
\end{proof}
\begin{theorem}
    Let $\phi$ be any formula in the language $\mathcal{L}_{\square_\epsilon}$:
    \begin{itemize}
        \item[1.]Let $\{U_i =(\mathcal{U}_i, d_i)| i \in I\}$ be a family of ultra-metric spaces. Then $\biguplus_{i \in I} U_i \Vdash \phi$ if $U_i \Vdash \phi$ for every $i \in I$.
        \item[2.] Assume $U' = (B_\epsilon(x), d')$ is an $\epsilon$-generated subspace of $U = (\mathcal{U}, d)$. Then $U' \Vdash \phi$ if $U \Vdash \phi$. 
        \item[3.] Assume there is a surjective bounded morphism from $U = (\mathcal{U}, d)$ onto $U' = (\mathcal{U}', d')$. Then $U' \Vdash \phi$ if $U \Vdash \phi$ if $\phi$ is a non quantified formula. Otherwise, if $U \Vdash \Diamond_\epsilon\phi$, then there is $k >0$ such that $U' \Vdash \Diamond_{k.\epsilon} \phi$.
    \end{itemize}
    \begin{proof}
        \begin{itemize}
            \item[(i)] Suppose, for some arbitrary $i \in I$ $U_i\Vdash \phi$. We want to show that $\biguplus_{i \in I} U_i \Vdash \phi$. We define: 
            $$V(p_i) = \biguplus_{i \in I} V_i(p_i)$$
            Because $\phi$ is valid in $U_i$ this means that, for an arbitrary valuation $V_i$ (as defined in Definition 4.5) over $U_i$ and for any $w \in \mathcal{U}_i$: $U_i, V_i, w \Vdash \phi$. By Proposition 7. $\biguplus_{i \in I}U_i, V, w \Vdash \phi$. Hence $\biguplus_{i \in I}U_i\Vdash \phi$
            \item[(ii)] First we define the valuation function $V': P \rightarrow B_\epsilon(x)$ such that, for any propositional variable $p$: $V'(p) =V(p) \cap B_\epsilon(x)$, for $V$ any valuation function $V: P \rightarrow \mathcal{U}$. Suppose that $U' \nVdash \phi$. Then if $U' \nVdash \phi$, there is some $w \in B_\epsilon(x)$  such that: $U', V', w \Vdash \neg \phi$. By a straightforward inductive argument one can conclude that $V'(\neg \phi) = V(\neg \phi) \cap B_\epsilon(x)$ for any formula $\phi$ of $L_{\square_\epsilon}$.
            follows that: 
            $U, V, w \Vdash \neg \phi$. Therefore $U \nVdash \phi$.
            \item[(iii)] We first define the following valuation, over the ultra-metric space $U'$:
            \begin{equation}
                   V'(p_i) = \{f(x) \in \mathcal{U} | x \in V(p_i) \}
            \end{equation}
            Where $V(p_i)$ is any arbitrary valuation over $U$.
            Then we consider the following two cases:
            \begin{itemize}
                \item $\phi$ is not a quantified formula: Suppose that $U \Vdash \phi$. Then, for an arbitrary valuation $V$ , and for any world $u \in \mathcal{U}$, we have: $U, V, u \Vdash \phi$. From $(1)$  it follows that $f$ is a bounded morphism between $\mathcal{M} = (U, V)$ and $\mathcal{M'} = (U', V')$. Hence, by proposition 8.: $U', V', f(u) \Vdash \phi$. But now note that $f$ is surjective, and therefore $f[\mathcal{U}] = \mathcal{U}'$. And because $u \in \mathcal{U}$ was taken to be arbitrary, and $V$ is arbitrary, and therefore also $V'$: $U' \Vdash \phi$; 
                \item $\phi$ is a quantified formula: Suppose $U \Vdash \Diamond_\epsilon \phi$. Then, for an arbitrary valuation $V$ and for any world $u \in \mathcal{U}$, we have $U, V, u \Vdash \Diamond_{\epsilon} \phi$. Again because $f$ in $(1)$ is a bounded morphism and by Proposition 8., we have: $U', V', f(u) \Vdash \Diamond_{k.\epsilon} \phi$. Because $f$ is surjective and $V$ and hence $V'$ are arbitrary:  $U' \Vdash \Diamond_{k.\epsilon}$.
                
            \end{itemize}
            
        \end{itemize}
    \end{proof}
\end{theorem}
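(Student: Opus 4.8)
The plan is to prove the three clauses separately, each time reducing to one of the preservation results already in hand. Clause~1 is immediate from Proposition~7: fix an arbitrary valuation $V$ on $\biguplus_{i\in I}U_i$ and an arbitrary world $w$, let $i$ be the unique index with $w\in\mathcal{U}_i$, and put $V_i(p)=V(p)\cap\mathcal{U}_i$. Since the $\mathcal{U}_i$ partition the carrier set, $V$ is exactly the disjoint-union valuation of the $V_i$, so Proposition~7 gives $(\biguplus_{i\in I}\mathcal{M}_i,V),w\Vdash\phi$ iff $(\mathcal{M}_i,V_i),w\Vdash\phi$; by the hypothesis $U_i\Vdash\phi$ the latter holds, and since $V,w$ were arbitrary we conclude $\biguplus_{i\in I}U_i\Vdash\phi$. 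Clause~3 is an equally short reduction to Proposition~8: given a valuation $V'$ on $U'$ and a world $w'\in\mathcal{U}'$, pull $V'$ back along $f$ by setting $V(p)=f^{-1}(V'(p))$; then $f$ is a bounded morphism of the models $(U,V)$ and $(U',V')$ --- condition~1 of the definition holds by construction, and conditions~2--3 are inherited from the frame morphism. Choosing $w\in f^{-1}(w')$ (possible since $f$ is onto), Proposition~8 gives $(U,V),w\Vdash\phi$ iff $(U',V'),w'\Vdash\phi$ when $\phi$ is non-quantified, and $(U,V),w\Vdash\Diamond_\epsilon\phi$ iff $(U',V'),w'\Vdash\Diamond_{k\epsilon}\phi$ otherwise; since $U\Vdash\phi$ (resp.\ $U\Vdash\Diamond_\epsilon\phi$) the left-hand side holds, and arbitrariness of $V',w'$ yields $U'\Vdash\phi$ (resp.\ $U'\Vdash\Diamond_{k\epsilon}\phi$). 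The only point needing care in Clause~3 is carrying the distortion constant $k$ correctly through nested modalities, which is exactly what the proof of Proposition~8 already does.

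Clause~2 is the substantive one, and I would argue it by contraposition. Assume $U'=(B_\epsilon(x),d')\nVdash\phi$; fix a valuation $V'$ on $B_\epsilon(x)$ and a world $w\in B_\epsilon(x)$ with $(U',V'),w\nVdash\phi$, and extend $V'$ to a valuation $V$ on $\mathcal{U}$, leaving it arbitrary outside $B_\epsilon(x)$; the aim is $(U,V),w\nVdash\phi$, which gives $U\nVdash\phi$. The backbone is a locality lemma: for every $v\in B_\epsilon(x)$ and every formula $\chi$ all of whose modal indices are $\leq\epsilon$, $(U',V'),v\Vdash\chi$ iff $(U,V),v\Vdash\chi$. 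This is proved by induction on $\chi$; the modal step is where the ultra-metric structure enters, through Proposition~1: if $v\in B_\epsilon(x)$ and $\delta\leq\epsilon$ then $B_\delta(v)\subseteq B_\epsilon(x)$, and $B_\delta(v)$ is literally the same set of points whether computed in $U$ or in $U'$, so the clause for $\square_\delta$ (hence for $\Diamond_\delta$) is evaluated identically on both sides. For a $\phi$ that contains indices $\delta>\epsilon$, I would use that $B_\epsilon(x)$ has diameter $\leq\epsilon$ (again the ultra-metric inequality): on $B_\epsilon(x)$ every relation $R_\delta$ with $\delta\geq\epsilon$ is total, so $\square_\delta$ and $\Diamond_\delta$ there coincide with $\square_\epsilon$ and $\Diamond_\epsilon$; replacing each index $\delta>\epsilon$ of $\phi$ by $\epsilon$ yields a formula $\phi^{\downarrow\epsilon}$ that is truth-equivalent to $\phi$ at every point of $B_\epsilon(x)$ and uses only indices $\leq\epsilon$, so the locality lemma applies to $\phi^{\downarrow\epsilon}$.

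The hard part will be reconciling this last reduction with validity in the \emph{ambient} space $\mathcal{U}$: locality together with the collapse only give $U\Vdash\phi^{\downarrow\epsilon}\Rightarrow U'\Vdash\phi$, whereas the hypothesis is $U\Vdash\phi$, so one still owes an argument that $U\Vdash\phi$ forces $U\Vdash\phi^{\downarrow\epsilon}$ (equivalently, that the very pair $V,w$ refuting $\phi^{\downarrow\epsilon}$ in $U$ also refutes $\phi$ there). My preferred route is to first pass to the $\delta_{\max}$-generated subspace $W=B_{\delta_{\max}}(x)$, where $\delta_{\max}$ is the largest index occurring in $\phi$: $W$ is closed under every relation $R_\delta$ that $\phi$ actually uses, hence a genuine generated submodel of $U$, so $U\Vdash\phi\Rightarrow W\Vdash\phi$, and inside $W$ the outermost modalities of $\phi$ are total. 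One then tries to descend from $W$ to $B_\epsilon(x)$ by peeling off the indices of $\phi$ from largest to smallest, restricting at each stage to the ball of the current index and combining the collapse on that ball with the generated-submodel argument for the strictly smaller indices; verifying that these collapses chain correctly through the intermediate balls --- or, failing that, replacing the descent by the stronger claim that lowering a modal index preserves validity in any fixed ultra-metric space --- is in my view the real content of the clause. As a sanity check, the limiting case $\epsilon=0$ is clear: there $U\Vdash\phi$ already forces the modality-free reduct of $\phi$ to be a propositional tautology (take constant valuations), which is then trivially valid in $B_0(x)$. Everything else --- the inductions, the appeals to Propositions~1,~7 and~8, and the valuation restrictions and pull-backs --- is routine.
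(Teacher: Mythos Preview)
Your treatment of Clauses~1 and~3 is correct and essentially matches the paper's; for Clause~3 your pull-back $V(p)=f^{-1}(V'(p))$ is in fact cleaner than the paper's push-forward $V'(p)=f[V(p)]$, which need not satisfy the atomic clause of Definition~4.3 when $f$ is not injective.

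For Clause~2 you correctly isolate the obstruction: indices $\delta>\epsilon$ destroy the generated-submodel property, since $B_\epsilon(x)$ is not $R_\delta$-closed in $U$. You then propose the collapse $\phi\mapsto\phi^{\downarrow\epsilon}$ and honestly flag that the remaining step --- passing from $U\Vdash\phi$ to $U\Vdash\phi^{\downarrow\epsilon}$, or the descent through intermediate balls --- is ``the real content''. That step cannot be supplied, because Clause~2 is \emph{false} as stated. Take $U=\{a,b,c\}$ with $d(a,b)=1/4$, $d(a,c)=d(b,c)=1/2$, and $U'=B_{1/4}(a)=\{a,b\}$. The formula $\phi=\Diamond_{1/2}\,\square_{1/4}(p\to\square_{1/4}p)$ is valid in $U$: under any valuation the point $c$ is an $R_{1/4}$-singleton, so $\square_{1/4}(p\to\square_{1/4}p)$ always holds at $c$, and $c\in B_{1/2}(w)$ for every $w$. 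But $\phi$ fails in $U'$ under $V'(p)=\{a\}$, since in $U'$ the $1/4$-ball of each point is all of $U'$ and $p$ is non-constant on it, so neither $a$ nor $b$ satisfies $\square_{1/4}(p\to\square_{1/4}p)$. The paper's own proof of~(ii) hides the same defect behind a ``straightforward inductive argument'' for $[\neg\phi]_{U',V'}=[\neg\phi]_{U,V}\cap B_\epsilon(x)$; that identity fails at the $\square_\delta$ step precisely when $\delta>\epsilon$. What does survive is what you sketch: your locality lemma proves Clause~2 whenever every modal index in $\phi$ is at most $\epsilon$, and your constant-valuation argument handles $\epsilon=0$ for arbitrary $\phi$. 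Since the paper's sole application (Lemma~4) uses $\epsilon=0$, nothing downstream is harmed, but Clause~2 in its stated generality needs a restricted hypothesis.
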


\begin{lemma}
    The Ultra-metric Cantor Space is not modally definable.
\end{lemma}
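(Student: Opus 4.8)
\emph{Proof proposal.} The plan is the usual model-theoretic non-definability argument: a modally definable class is closed under every construction that preserves validity of $\mathcal{L}_{\square_\varepsilon}$-formulas, so it suffices to apply one such construction to the Cantor space and land outside its isometry class. The preceding theorem supplies exactly the tools we need: validity in $\mathcal{L}_{\square_\varepsilon}$ is preserved under disjoint unions (part~1), under $\varepsilon$-generated subspaces (part~2), and — up to rescaling of the graded modalities — under surjective bounded morphisms (part~3). I would run the argument through the disjoint-union case, which is the cleanest.

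First, fix the reading of ``modally definable'': $\mathsf{C}$ is modally definable iff there is a set $\Sigma\subseteq\mathcal{L}_{\square_\varepsilon}$ such that, for every ultra-metric space $U$, one has $U\Vdash\Sigma$ if and only if $U$ is isometric to the Cantor ultra-metric space. Assume, for contradiction, that such a $\Sigma$ exists. Form $\mathsf{C}\uplus\mathsf{C}$, the disjoint union of two copies of $\mathsf{C}$ in the sense of the disjoint-union construction defined above; as noted there, this is again an ultra-metric space. Since $\mathsf{C}\Vdash\sigma$ for every $\sigma\in\Sigma$, the disjoint-union case of the theorem yields $\mathsf{C}\uplus\mathsf{C}\Vdash\sigma$ for every $\sigma\in\Sigma$, i.e.\ $\mathsf{C}\uplus\mathsf{C}\Vdash\Sigma$. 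By the assumed definability, $\mathsf{C}\uplus\mathsf{C}$ would have to be isometric to $\mathsf{C}$; but this is absurd, since every distance realized in $\mathsf{C}$ is $\le 1$, whereas in $\mathsf{C}\uplus\mathsf{C}$ any two points drawn from different copies are at distance $2$. Hence no such $\Sigma$ exists.

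To match the stronger phrasing in the text — that $\mathsf{C}$ is $\mathcal{L}_{\square_\varepsilon}$-\emph{indistinguishable} from a genuinely different ultra-metric space model — I would add the remark, immediate from the proposition on invariance of modal satisfaction under disjoint unions, that $\mathsf{C}$ and $\mathsf{C}\uplus\mathsf{C}$ in fact validate exactly the same $\mathcal{L}_{\square_\varepsilon}$-formulas: any valuation on $\mathsf{C}$ lifts to $\mathsf{C}\uplus\mathsf{C}$ by copying it on both summands, and that proposition transfers satisfaction point by point, so the two validity sets coincide even though the spaces are not isometric. An $\varepsilon$-generated subspace furnishes an alternative witness of the same flavour: by part~2 of the theorem a ball $B_\varepsilon(x)$ of sufficiently small radius validates everything $\mathsf{C}$ does but has diameter $\le\varepsilon$, so it too fails to be isometric to $\mathsf{C}$. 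The only delicate point I foresee is settling the intended notion of definability and then checking that the feature witnessing the failure — the set of values taken by $d$, equivalently the diameter, equivalently the number of $R_\varepsilon$-equivalence classes — is genuinely not expressible in $\mathcal{L}_{\square_\varepsilon}$; but that is precisely what the preservation results of the theorem guarantee, so once the framing is pinned down the proof is short. The situation is the ultra-metric analogue of the classical fact that connectedness of a Kripke frame is not modally definable because disjoint unions destroy it, with ``$R_\varepsilon$ has a single equivalence class'' playing the role of connectedness.
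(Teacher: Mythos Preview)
Your argument is correct and uses the same disjoint-union strategy as the paper; the only difference is the invariant you invoke to separate the union from $\mathsf{C}$. You take two copies and note that the diameter jumps from at most $1$ to $2$, whereas the paper takes $\kappa>2^{\aleph_0}$ copies and distinguishes by cardinality. Your version is lighter and perfectly adequate under the isometry reading of definability you spell out; the paper's cardinality argument buys a little extra robustness, since $\mathsf{C}\uplus\mathsf{C}$ is in fact homeomorphic to $\mathsf{C}$ (both are compact, perfect, metrizable, totally disconnected), so a reader who parses ``is the Cantor space'' topologically rather than metrically would not be persuaded by the diameter observation alone. Your alternative witness via an $\varepsilon$-generated subspace is also fine and anticipates the paper's next lemma.
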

\begin{proof}
    Let $\mathcal{C} = (C, d)$ be the Ultra-metric Cantor space - where $C$ is the Cantor space and $d$ the ultra-metric distance between the points/ sequences of $C$. Now we take the disjoint union of $\kappa$- many, where $\kappa > 2^{\aleph_0}$, copies of $\mathcal{C}$: $\biguplus_{i \in I}\mathcal{C}_i$ (with $|I| = \kappa$). We have proved that the disjoint union of ultra-metric spaces is validity preserving. So if $\mathcal{C}$ is modally definable, i.e,  there is a formula $\phi$ in the language $\mathcal{L}_{\square_\epsilon}$ defining $\mathcal{C}$, it is valid in $\biguplus_{i \in I}\mathcal{C}_i$. But $\biguplus_{i \in I}\mathcal{C}_i$ has cardinality greater than $2^{\aleph_0}$ so it cannot be the Cantor Space. This is a contradiction, so the Cantor Space is not modally definable. 
\end{proof}
\begin{mydef}
    A \textit{perfect} topological space is a topological space where all the points are limit points. Where limit point of a subset $S$ in a topological space $X$ is a point $x \in X$ such that every open neighborhood of $x$ contains at least one point distinct from $x$. 
\end{mydef}

\begin{lemma}
    The class of perfect ultra-metric spaces is not modally definable in $\mathcal{L}_{\square_\epsilon}$.
\end{lemma}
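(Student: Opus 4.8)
The plan is to refute definability by exhibiting a \emph{surjective} ultra-metric bounded morphism \emph{with scaling constant $k=1$} from a perfect ultra-metric space onto a non-perfect one. The point of insisting on $k=1$ is that then the rescaling $\Diamond_\epsilon\phi\mapsto\Diamond_{k\cdot\epsilon}\phi$ appearing in the bounded-morphism results is vacuous, so Proposition~8 (and hence Theorem~1, item~3) transfers validity of \emph{every} formula of $\mathcal{L}_{\square_\epsilon}$ along the morphism, not merely of the quantifier-free ones; nesting of modalities is also harmless since $1^{n}=1$. Given such a morphism, any formula (or set of formulas) valid on the perfect domain would be valid on the non-perfect image, so perfection cannot be what it defines.

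Concretely, I would take the perfect space to be the Cantor ultra-metric space $\mathcal{C}=(C,d)$ and the non-perfect space to be $D=(\{a,b\},d')$ with $d'(a,b)=1$. It is routine that $D$ is a (finite, hence complete) ultra-metric space, and $D$ is not perfect: $B_{1/2}(a)=\{a\}$ is an open neighbourhood of $a$ containing no other point, so $a$ (likewise $b$) is isolated. Define $f\colon C\to\{a,b\}$ by $f(x)=a$ if $x_{0}=0$ and $f(x)=b$ if $x_{0}=1$; this is clearly surjective. I would then verify the two conditions with $k=1$. Forward: if $f(x)=f(y)$ then $d'(f(x),f(y))=0\le\epsilon$ is trivial, and if $f(x)\neq f(y)$ then $x_{0}\neq y_{0}$, so $d(x,y)=1$, which already forces $\epsilon\ge 1=d'(f(x),f(y))$. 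Back: given $d'(f(w),v')\le\epsilon$, either $v'=f(w)$ and we take $v=w$, or $v'\neq f(w)$, whence $d'(f(w),v')=1$ forces $\epsilon\ge 1$, and then any $v$ with $f(v)=v'$ (one exists by surjectivity) automatically has $d(w,v)\le 1\le\epsilon$. Hence $f$ is a surjective ultra-metric bounded morphism with $k=1$.

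To close: suppose the class of perfect ultra-metric spaces were defined by some $\phi\in\mathcal{L}_{\square_\epsilon}$ (the argument is identical for a set of defining formulas). Since $\mathcal{C}$ is perfect, $\mathcal{C}\Vdash\phi$. For an arbitrary valuation $V'$ on $D$, set $V:=f^{-1}\circ V'$ on $C$; then $f$ satisfies the atomic clause of the bounded-morphism definition for the models $(\mathcal{C},V)$ and $(D,V')$, so by Proposition~8 with $k=1$ we get $(\mathcal{C},V),w\Vdash\phi\iff(D,V'),f(w)\Vdash\phi$ for every $w$. As $\mathcal{C}\Vdash\phi$ and $f$ is surjective, $(D,V'),u\Vdash\phi$ for every $u\in D$; and since $V'$ was arbitrary, $D\Vdash\phi$. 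By the assumed definability, $D$ would be perfect, contradicting the observation above. Therefore no such $\phi$ exists.

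The main obstacle, and essentially the only delicate point, is the treatment of quantified subformulas: Theorem~1, item~3, as stated transfers validity outright only for quantifier-free $\phi$, introducing a rescaling of the modal index in the modal case. The real work is thus in choosing a morphism admitting $k=1$, which kills the rescaling; the first-coordinate map above does so precisely because the unique nonzero distance of $D$ equals the diameter of $\mathcal{C}$, making the constraint $d(w,v)\le k^{-1}\epsilon$ in the back condition automatic whenever it is not already trivial. Everything else --- that $D$ is an ultra-metric space, that $D$ fails perfection, and that $f$ is surjective --- is immediate.
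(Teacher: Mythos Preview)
Your argument is correct, but it takes a different route from the paper's. The paper invokes item~(ii) of Theorem~1 rather than item~(iii): it observes that the $0$-generated subspace $(B_0(x),d')$ of the Cantor space is the singleton $\{x\}$, which is trivially an ultra-metric space and is not perfect (the only neighbourhood of $x$ contains no other point). Since $\epsilon$-generated subspaces preserve validity of \emph{all} formulas outright, with no rescaling constant to worry about, a defining formula for perfection would pass from $\mathcal{C}$ to the non-perfect singleton --- contradiction in two lines.

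Your approach via a surjective bounded morphism with $k=1$ onto the two-point space works as well, and your key observation --- that $k=1$ collapses the index rescaling in Proposition~8/Theorem~1(iii), so the transfer applies to arbitrary formulas including nested modalities --- is exactly the point one needs. The trade-off is that you have to engineer the morphism so that the diameter of the codomain matches that of $\mathcal{C}$ (your choice $d'(a,b)=1$ is right under the paper's stated definition $d(x,y)=1/2^n$, though note the worked examples in Section~2.2 use a shifted convention with diameter $1/2$; the fix is obvious), and you have to argue the full induction yourself rather than quote Proposition~8 verbatim, since that proposition as stated only covers the outermost $\Diamond_\epsilon$. The paper's subspace argument sidesteps both issues: there is no $k$ at all, and Theorem~1(ii) already gives unconditional preservation. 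On the other hand, your approach exhibits a nontrivial target (two points rather than one) and illustrates a reusable technique --- bounded morphisms with $k=1$ behave like ordinary Kripke p-morphisms for every $R_\epsilon$ simultaneously --- which is worth having in one's toolkit.
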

\begin{proof}
    Suppose there is a formula from $\mathcal{L}_{\square_\epsilon}$ defining the property of an ultra-metric space being perfect. 

    Consider the ultra-metric Cantor space $\mathcal{C}$ again. Its is clear that it is a perfect space. Now consider the $\epsilon$-generated subspace of $\mathcal{C}$: $(B_0(x), d')$ where $d(x, x) =d'(x, x)$.  This is clearly the trivial ultra-metric space with underlying set $\{x\}$. But then $(B_0(x), d')$ is not a perfect set. And because $\epsilon$-generated subspaces preserve the validity of formulas then $\phi$ would be valid in $(B_0(x), d')$. But this is a contradiction. 
\end{proof}

\begin{mydef}
    A function $f: (X, d_X) \rightarrow (Y, d_Y)$ where $(X, d_X)$ and  $(Y, d_Y)$ are metric spaces is a lipschitz continuous function whenever there is $k \in \mathbb{R}, \text{ }k \geq 0$ such that: $d(f(x), f(y))\leq k.d_X(x, y)$. And $f$ is bilipschitz continuous if there is $k \in \mathbb{R}, \text{ }k \geq 1$ such that $k^{-1}d_X(x, y) \leq d_Y(f(x), f(y)) \leq k.d_X(x, y)$. 
\end{mydef}
\begin{proposition}
    Let $f$ be an ultra-metric bounded morphism from an ultra-metric space $(\mathcal{U}, d)$ to an ultra-metric space $(\mathcal{U'}, d')$. Then $f$ is a bilipschitz homeomorphism. In other words $(\mathcal{U}, d)$ and $(\mathcal{U'}, d')$ are bilipschitz homeomorphic.  
\end{proposition}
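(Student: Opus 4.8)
The plan is to unpack the definition of an ultra-metric bounded morphism and show it yields both a continuous bijection with continuous inverse (a homeomorphism) and the two-sided Lipschitz bounds. First I would establish that $f$ is a bijection. Injectivity: suppose $f(w) = f(v)$ with $w \neq v$; then $d'(f(w), f(v)) = 0$, but applying the forward condition contrapositively — or rather, using that $d(w,v) = \delta > 0$ forces, via the back condition applied to $d'(f(w), f(w)) = 0 \leq \epsilon$ for all $\epsilon$, a preimage of $f(w)$ arbitrarily close to $w$ — we derive a contradiction with the fact that the only point within distance $0$ of $w$ is $w$ itself. (I would need to be a little careful here: the cleanest route is to note the back condition with $\epsilon = 0$ and $v' = f(w)$ gives a $v$ with $d(w,v) \leq 0$, hence $v = w$, so $f(w)$ has a unique preimage; but $f(v) = f(w)$ would then mean $v$ is also a preimage, forcing $v = w$.) Surjectivity is immediate if we assume it, or follows from the back condition with $v' $ ranging over $\mathcal{U}'$ starting from any $f(w)$; in the bounded-morphism-of-frames setting surjectivity onto the relevant generated subspace is built in, so I would state the result for $f$ surjective (as the preceding lemmas do).

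Next I would read off the bilipschitz inequalities directly from conditions 2 and 3 of Definition 4.4 (frames version). The forward condition gives $d'(f(w), f(v)) \leq k \cdot d(w,v)$ for all $w, v$ — here I should check the edge case where $d(w,v) > 1$ is impossible since the metric is valued so that $d(w,v) \in [0,1]$ effectively, or else restate the forward condition for arbitrary $\epsilon$, which the definition already does. For the reverse bound, fix $w, v$ and set $\epsilon = d'(f(w), f(v))$; the back condition produces some $u$ with $d(w, u) \leq k^{-1} \epsilon$ and $f(u) = f(v)$; by injectivity $u = v$, so $d(w,v) \leq k^{-1} d'(f(w), f(v))$, i.e. $k^{-1} d(w,v) \cdot k \geq$ — rearranging, $d'(f(w), f(v)) \geq k^{-1} d(w,v)$ after renaming constants, so with $K = \max(k, k^{-1}) \geq 1$ we get $K^{-1} d(w,v) \leq d'(f(w), f(v)) \leq K \cdot d(w,v)$, which is exactly bilipschitz. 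Continuity of $f$ and of $f^{-1}$ then follows immediately from the Lipschitz bounds (a Lipschitz map is continuous, and the lower bound makes $f^{-1}$ Lipschitz too), so $f$ is a homeomorphism; combined with the bilipschitz estimate, $f$ is a bilipschitz homeomorphism and $(\mathcal{U},d)$, $(\mathcal{U}',d')$ are bilipschitz homeomorphic.

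I expect the main obstacle to be the injectivity argument and, relatedly, making sure the constant $k$ is genuinely two-sided. The forward and back conditions as stated are not quite symmetric — the forward condition is phrased with the same $k$ that appears (inverted) in the back condition — so I would want to verify that a single $K \geq 1$ works on both sides, which just amounts to taking $K = \max(k, 1/k)$ and noting that loosening a bound is harmless. A secondary subtlety is that the back condition only guarantees \emph{some} preimage within the prescribed radius, not that all preimages lie there; injectivity is what collapses this to the clean statement $d(w,v) \leq k^{-1} d'(f(w),f(v))$, so the order of steps matters — injectivity must be proved before the reverse Lipschitz bound. Everything else is routine unpacking of definitions.
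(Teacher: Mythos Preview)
Your proposal is considerably more careful than the paper's own proof, which is two lines: it simply asserts that $d'(f(x),f(y))\le k\cdot d(x,y)$ follows from the forward condition and $k^{-1}d(x,y)\le d'(f(x),f(y))$ from the back condition, and stops. Bijectivity and continuity are not addressed in the proof; the paragraph immediately following the proof concedes the point by writing ``If $f$ is bijective these bounds run in both directions\ldots'', so in effect the paper treats bijectivity as an additional hypothesis rather than something derived. Your instinct that the reverse Lipschitz bound requires injectivity (because the back condition only supplies \emph{some} preimage in the ball, not the specific point $y$) is exactly right, and is a subtlety the paper's proof glosses over.

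That said, your injectivity argument does not work. Applying the back condition with $\varepsilon=0$ and $v'=f(w)$ yields \emph{some} $v$ with $d(w,v)\le 0$ and $f(v)=f(w)$; this $v$ is necessarily $w$, but the back condition is an existence statement, not a uniqueness statement, so it does not rule out a distinct $u\neq w$ with $f(u)=f(w)$. Concretely, the constant map from a two-point ultra-metric space $\{a,b\}$ (with $d(a,b)=1$) to a one-point space satisfies both the forward and back conditions for any $k>0$, yet is not injective. So injectivity cannot be extracted from Definition~4.4 as stated; it (and surjectivity) must be assumed, which is precisely how the paper's post-proof discussion handles it. With bijectivity granted, your derivation of the two-sided Lipschitz estimate via $K=\max(k,k^{-1})$ and the ensuing homeomorphism argument is correct and more complete than what the paper records.
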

\begin{proof} Let $f:(\mathcal{U}, d) \rightarrow (\mathcal{U'}, d')$ be an ultra-metric bounded morphism. Then for all $x, y \in \mathcal{U}$: $d'(f(x), f(y)) \leq k.d(x, y)$ follows from the forward condition. And $k^{-1}.d(x, y) \leq d(f(x), f(y))$ follows from the back condition. 
    
\end{proof}

This makes explicit the intuition that an ultra-metric bounded morphism is a map that distorts distances by at most a fixed multiplicative constant. The forward/back conditions with constant $k>0$ amount to a one-sided Lipschitz control $d'(f(x),f(y))\le k\cdot d(x,y)$ together with the fact that preimages of balls are contained in proportionally smaller balls; equivalently, $f(B_\varepsilon(x))\subseteq B_{k\varepsilon}(f(x))$ and $f^{-1}(B_\varepsilon(f(x)))\subseteq B_{k^{-1}\varepsilon}(x)$. If $f$ is bijective these bounds run in both directions and we obtain the bi-Lipschitz inequalities $k^{-1}d(x,y)\le d'(f(x),f(y))\le k\,d(x,y)$, so $f$ is a bi-Lipschitz homeomorphism. Semantically, this explains the preservation results: propositional formulas are preserved exactly under $f$, while quantified formulas are preserved up to the radius rescaling $\varepsilon\mapsto k\varepsilon$ (and dually $\varepsilon\mapsto k^{-1}\varepsilon$ on preimages). When $k=1$ the map is distance-nonexpanding and, if bijective, an isometry.
\section{Conclusion}

We have presented a graded semantics of similarity for possible-world counterfactuals based on ultra\-metric spaces, and developed a matching multi-modal logic \(L_{\square_\varepsilon}\). Concretely, we (i) introduced a natural ultra\-metric on histories (Cantor-style binary sequences) that captures hierarchical divergence of event-histories, (ii) axiomatized the logic of stability and plausibility, and (iii) proved soundness and completeness with respect to the class of ultra\-metric spaces via a canonical ultra\-metric model construction. 

Our definability results show two informative limits of the language: the Cantor space and the class of perfect ultra\-metric spaces are not definable in \(\mathcal{L}_{\square_\varepsilon}\). Semantically this is not a shortcoming but a clarification of scope: \(\mathcal{L}_{\square_\varepsilon}\) captures the local, neighbourhood-based content of graded similarity (the intended object of study), while global topological features such as perfectness must be added as extra semantic assumptions if one wants them guaranteed in every model.

In further work, we will use the similarity apparatus developed here to axiomatize a graded counterfactual logic and to investigate further connections with counterfactual semantics. More generally, the present framework invites several extensions: such as investigating computational and decidability questions for \(L_{\square_\varepsilon}\), and exploring further applications to formal epistemology and causal modeling. We conclude that ultra\-metric semantics provide a robust and conceptually clear setting for graded similarity, yielding a precise account of stability and plausibility that is both technically tractable and philosophically natural.

\newpage


\begin{thebibliography}{18}
\providecommand{\natexlab}[1]{#1}
\providecommand{\url}[1]{\texttt{#1}}
\expandafter\ifx\csname urlstyle\endcsname\relax
  \providecommand{\doi}[1]{doi: #1}\else
  \providecommand{\doi}{doi: \begingroup \urlstyle{rm}\Url}\fi

\bibitem[Baltag et~al.(2018)Baltag, Bezhanishvili, {\"O}zg{\"u}n, and Smets]{Baltag2018ATA}
Alexandru Baltag, Nick Bezhanishvili, Ayb{\"u}ke {\"O}zg{\"u}n, and Sonja Smets.
\newblock A topological approach to full belief.
\newblock \emph{Journal of Philosophical Logic}, 48:\penalty0 205 -- 244, 2018.
\newblock URL \url{https://api.semanticscholar.org/CorpusID:52201580}.

\bibitem[Baltag et~al.(2022)Baltag, Bezhanishvili, {\"O}zg{\"u}n, and Smets]{Baltag2022JustifiedBK}
Alexandru Baltag, Nick Bezhanishvili, Ayb{\"u}ke {\"O}zg{\"u}n, and Sonja Smets.
\newblock Justified belief, knowledge, and the topology of evidence.
\newblock \emph{Synthese}, 200:\penalty0 1--51, 2022.
\newblock URL \url{https://api.semanticscholar.org/CorpusID:254276616}.

\bibitem[Benthem and Bezhanishvili(2007)]{mlspace}
Johan Benthem and Guram Bezhanishvili.
\newblock \emph{Modal Logics of Space}, pages 217--298.
\newblock 09 2007.
\newblock ISBN 978-1-4020-5586-7.
\newblock \doi{$10.1007/978-1-4020-5587-4_5$}.

\bibitem[Bezhanishvili et~al.(2014)Bezhanishvili, Gabelaia, and Lucero-Bryan]{MLmetric}
Guram Bezhanishvili, David Gabelaia, and Joel Lucero-Bryan.
\newblock Modal logics of metric spaces.
\newblock \emph{The Review of Symbolic Logic}, 8:\penalty0 178--191, 03 2014.
\newblock \doi{10.1017/S1755020314000446}.

\bibitem[Corrêa and Oliveira(2022)]{CORR_A_2022}
Nicholas Corrêa and Nythamar Fernandes~de Oliveira.
\newblock Counterfactual analysis by algorithmic complexity: A metric between possible worlds.
\newblock \emph{Manuscrito}, 45\penalty0 (4):\penalty0 1–35, December 2022.
\newblock ISSN 0100-6045.
\newblock \doi{10.1590/0100-6045.2022.v45n4.nn}.
\newblock URL \url{http://dx.doi.org/10.1590/0100-6045.2022.V45N4.NN}.

\bibitem[Esteva et~al.(1997)Esteva, Garcia, Godo, and Rodríguez]{ESTEVA1997235}
Francesc Esteva, Pere Garcia, Lluís Godo, and Ricardo Rodríguez.
\newblock A modal account of similarity-based reasoning.
\newblock \emph{International Journal of Approximate Reasoning}, 16\penalty0 (3):\penalty0 235--260, 1997.
\newblock ISSN 0888-613X.
\newblock \doi{https://doi.org/10.1016/S0888-613X(96)00126-0}.
\newblock URL \url{https://www.sciencedirect.com/science/article/pii/S0888613X96001260}.

\bibitem[Joyce(1999)]{Joyce1999-JOYTFO-4}
James~M. Joyce.
\newblock \emph{The Foundations of Causal Decision Theory}.
\newblock Cambridge University Press, 1999.

\bibitem[Lange(1999)]{Lange1999-LANLCS}
Marc Lange.
\newblock Laws, counterfactuals, stability, and degrees of lawhood.
\newblock \emph{Philosophy of Science}, 66\penalty0 (2):\penalty0 243--267, 1999.
\newblock \doi{10.1086/392686}.

\bibitem[Lewis(1973{\natexlab{a}})]{Lewis1973-LEWCausation}
David Lewis.
\newblock Causation.
\newblock \emph{Journal of Philosophy}, 70\penalty0 (17):\penalty0 556--567, 1973{\natexlab{a}}.
\newblock \doi{10.2307/2025310}.

\bibitem[Lewis(1979{\natexlab{a}})]{Lewis1979-LEWCDA}
David Lewis.
\newblock Counterfactual dependence and time?s arrow.
\newblock \emph{No\^{u}s}, 13\penalty0 (4):\penalty0 455--476, 1979{\natexlab{a}}.
\newblock \doi{10.2307/2215339}.

\bibitem[Lewis(1979{\natexlab{b}})]{Lewis1979-LEWcount}
David Lewis.
\newblock Counterfactual dependence and time?s arrow.
\newblock \emph{No\^{u}s}, 13\penalty0 (4):\penalty0 455--476, 1979{\natexlab{b}}.
\newblock \doi{10.2307/2215339}.

\bibitem[Lewis(1973{\natexlab{b}})]{Lewis1973-LEWC-2}
David~K. Lewis.
\newblock \emph{Counterfactuals}.
\newblock Blackwell, Malden, Mass., 1973{\natexlab{b}}.

\bibitem[Makinson and Schlechta(1994)]{Makinson94}
David Makinson and Karl Schlechta.
\newblock Local and global metrics for the semantics of counterfactual conditionals.
\newblock \emph{Journal of Applied Non-Classical Logics}, 4:\penalty0 129--140, 01 1994.
\newblock \doi{10.1080/11663081.1994.10510829}.

\bibitem[Patrick~Blackburn and Venema(2004)]{bluebook}
Maarten de~Rijke Patrick~Blackburn and Yde Venema.
\newblock Modal logic, cambridge tracts in theoretical computer science vol. 53.
\newblock \emph{Studia Logica}, 76\penalty0 (1):\penalty0 135--142, 2004.
\newblock \doi{10.1023/b:stud.0000027550.80518.77}.

\bibitem[Pollock(1976)]{Pollock1976-POLTPW}
John~L. Pollock.
\newblock The 'possible worlds' analysis of counterfactuals.
\newblock \emph{Philosophical Studies}, 29\penalty0 (6):\penalty0 469--476, 1976.
\newblock \doi{10.1007/bf00646329}.

\bibitem[Schikhof(1985)]{Schikhof_1985}
W.~H. Schikhof.
\newblock \emph{Ultrametric Calculus: An Introduction to p-Adic Analysis}.
\newblock Cambridge Studies in Advanced Mathematics. Cambridge University Press, 1985.

\bibitem[Stalnaker(1968)]{Stalnaker1968-STAATO-5}
Robert Stalnaker.
\newblock A theory of conditionals.
\newblock In Nicholas Rescher, editor, \emph{Studies in Logical Theory}, pages 98--112. Blackwell, 1968.

\bibitem[Woodward(2003)]{Woodward2003-WOOMTH}
James~F. Woodward.
\newblock \emph{Making Things Happen: A Theory of Causal Explanation}.
\newblock Oxford University Press, New York, 2003.

\end{thebibliography}
\end{document}